\title{Weak Factorization of Hardy Spaces in the Bessel Setting}
\author{Roc Oliver}
\address{Roc Oliver, Departament de Matem\`atiques i Inform\`atica, Universitat de Barcelona, Gran Via 585, 08007 Barcelona, Spain}
\email{roc.oliver@ub.edu}
\thanks{The first author would like thank WUSTL for the great hospitality he has received during his stay. R.~Oliver was supported by DGICYT Grant MTM2014-51834-P and DURSI Grant 2014SGR 289.}
\author{Brett D.~Wick}
\address{Brett D. Wick, Department of Mathematics, Washington University--St.~Louis, One Brookings Drive, St.~Louis, MO 63130-4899, USA}
\email{wick@math.wustl.edu}
\thanks{B.~D.~Wick was partially supported by National Science Foundation DMS Grants \#1603246 and
\#1560955}
\begin{document}

\begin{abstract}
We provide the weak factorization of the Hardy spaces $H^{p}(\mathbb{R}_+, dm_{\lambda})$ in the Bessel setting, for $p\in \left(\frac{2\lambda + 1}{2\lambda + 2}, 1\right]$.  As a corollary we obtain a characterization of the boundedness of the commutator $[b, R_{\Delta_{\lambda}}]$ from $L^{q}(\mathbb{R}_+, dm_{\lambda})$ to $L^{r}(\mathbb{R}_+, dm_{\lambda})$ when $b\in \textrm{Lip}_{\alpha}(\mathbb{R}_+, dm_{\lambda})$ provided that $\alpha = \frac{1}{q} - \frac{1}{r}$. The results are a slight generalization and modification of the work of Duong, Li, Yang, and the second named author, which in turn are based on modifications and adaptations of work by Uchiyama.
\end{abstract}
\maketitle

\section{Introduction}

The theory of Hardy spaces has been studied and developed extensively in harmonic analysis and more precisely the theory of Hardy spaces on the Euclidean setting has been shown to have many applications, see \cite{Duren1970,Stein1993,FeffermanStein1972,CoifmanWeiss1977} for an instance of general references. 



The real-variable Hardy space theory on $n$-dimensional Euclidean space $\R^{n}, n \geq 1$, plays an important role in harmonic analysis and has been systematically developed \cite{FeffermanStein1972,CoifmanWeiss1977}. There are many equivalent definitions of the Hardy spaces $H^{p}(\R^{n})$, $0 < p <\infty$.  It is well-known that when $p > 1$, the actual definition of $H^{p}(\R^{n})$ makes it equivalent to $L^{p}(\R^{n})$, but when $p \in (0,1]$, these spaces are much better suited to ask questions about harmonic analysis than are the $L^{p}(\R^{n})$ spaces, see \cite{Grafakos2014,Stein1993} for an account of all of this.

In the case of the real-variable Hardy space $H^{1}(\R^{n})$, Coifman, Rochberg and Weiss \cite{CoifmanRochbergWeiss1976} provided a factorization that works in studying function theory and operator theory of $H^{1}(\R^{n})$ which was called the weak factorization. This weak factorization for $H^{1}(\R^{n})$ consist of the following: every $f\in H^{1}(\R^{n})$ can be written as
$$f = \sum_{j=1}^{\infty} \sum_{i=1}^{n} (g_{j}^{i} R_{i} h_{j}^{i} + h_{j}^{i} R_{i}g_{j}^{i}),$$
where $\{g_{j}^{i}\}_{i,j}, \{h_{j}^{i}\}_{i,j} \in H^{2}(\R^{n})$ and $R_{i}$ are the Riesz transforms on $\mathbb{R}^n$ and $\left\Vert f\right\Vert_{H^1(\mathbb{R}^n)}\simeq \inf\left\{\sum_{j=1}^\infty\sum_{i=1}^{n} \left\Vert g_{j}^{i}\right\Vert_{L^2(\mathbb{R}^n)}\left\Vert g_{j}^{i}\right\Vert_{L^2(\mathbb{R}^n)}\right\}$, with the infimum taken over all possible representations of $f$ as above.  Later, Uchiyama \cite{Uchiyama1981} found an algorithmic way to generalize this weak factorization for Hardy spaces $H^{p}(\R^{n})$ with values of $p\in (0,1]$, but close to $1$.  In fact, the algorithm that Uchiyama provides works in spaces of homogeneous type for Calder\'on--Zygmund operators that satisfy certain lower bounds on their kernels.  On the other hand, it is also well-known, as pointed in \cite{CoifmanRochbergWeiss1976}, that this weak factorization is closely related with the boundedness of some commutator on $L^{p}$ spaces, to be defined later.  Since then, many authors generalized the boundedness of this commutator between different $L^{p}$ spaces \cite{Janson1978,PaluszynskiTaiblesonWeiss1991,Paluszynski1995,KarlovichLerner2005}.

The theory of the classical Hardy space $H^{p}(\R^{n})$ is intimately connected to the Laplacian $\Delta$.  Changing the differential operator introduces new challenge and directions to explore. In 1965, Muckenhoupt and Stein in \cite{MuckenhouptStein1965} introduced the notion of conjugacy associated with this Bessel operator $\Delta_{\lambda}$, $\lambda>0$, which is defined by
$$\Delta_{\lambda} f(x) := -\frac{d^{2}}{dx^{2}} f(x) - \frac{2\lambda}{x} \frac{d}{dx}f(x), \qquad x >0.$$
They developed a theory in the setting of $\Delta_{\lambda}$ which parallels the classical one associated to $\Delta$. Results on $L^{p}\dm$-boundedness of conjugate functions and fractional integrals associated with $\Delta_{\lambda}$ were obtained, where $p\in (1, \infty)$, $\R_{+}:= (0, \infty)$ and $d\m(x) := x^{2\lambda} dx$. Since then, many problems based on the Bessel context were studied; see, for example, \cite{Betancor2007,Betancor2010,Villani2008,YangYang2011,DuongLiWickYang2015}. In particular, the properties and $L^{p}$ boundedness $(1 < p <\infty)$ of Riesz transforms
$$R_{\Delta_{\lambda}} f := \partial_{x} (\Delta_{\lambda})^{-1/2}f$$
related to $\Delta_{\lambda}$ have been studied in \cite{MuckenhouptStein1965,Villani2008}. The related Hardy space
$$H^{1} \dm := \left\{f \in L^{1}\dm \colon R_{\Delta_{\lambda}} f \in L^{1}\dm\right\}$$
with norm $\norm{f}_{H^{1}\dm} := \norm{f}_{L^{1}\dm} + \norm{R_{\Delta_{\lambda}}f}_{L^{1}\dm}$ has been studied by Betancor et al. in \cite{BetancorDziubanskiTorrea2009} where they established the characterizations of the atomic Hardy space $H^{1}\dm$ associated with $\Delta_{\lambda}$ in terms of the Riesz transform and the radial maximal function associated with the Hankel convolution of a class $Z^{[\lambda]}$ of functions, which includes the Poisson semigroup and the heat semigroup as special cases. Duong, Li, Wick and Yang \cite{DuongLiWickYang2015} used Uchiyama's algorithm to prove the weak factorization on the Bessel setting of Hardy space $H^{1}\dm$ in terms of the Riesz transform $R_{\Delta_{\lambda}}$.  One can not appeal to Uchiyama's results directly since the kernel of the Bessel Riesz transforms do not satisfy the hypotheses of his results in \cite{Uchiyama1981}; however, with appropriate modifications one can carry out his approach.  

For the general case of $p\in (0,1]$, Yang and Yang \cite{YangYang2011} characterized the Hardy spaces $H^{p}\dm$ via atomic decomposition for values of $p\in \left(\frac{2\lambda + 1}{2\lambda + 2}, 1\right]$. They also showed a counterpart of the characterization of Hardy spaces in terms of the Riesz transforms. More concretely, in \cite[Theorem 1.2]{YangYang2011}, they proved that for $p\in \left(\frac{2\lambda + 1}{2\lambda + 2}, 1\right]$, we have that $f\in H^{p}\dm$ if and only if there exist $C>0$ such that
$$\norm{f \sharp_{\lambda} \phi_{\delta}}_{p} + \norm{R_{\Delta_{\lambda}} (f \sharp_{\lambda} \phi_{\delta})}_{p} \leq C,$$
where $\phi_{\delta} (x) = \a^{-2\lambda -1}\phi (x/\delta), \phi\in Z^{[\lambda]}$ and
$$f \sharp_{\lambda} g(x) := \int_{0}^{\infty} f(y) \tau_{x}^{[\lambda]} g(y) d\m(y),$$
where for $x\in (0,\infty)$, $\tau_{x}^{[\lambda]} g(y)$ denotes the Hankel translation of $g(y)$, that is,
$$\tau_{x}^{[\lambda]} g(y) := \frac{\Gamma (\lambda +\frac{1}{2})}{\Gamma(\lambda) \sqrt(\pi)} \int_{0}^{\pi} g(\sqrt{x^{2}+y^{2}-2xy\cos\theta})(\sin \theta)^{2\lambda-1} d\theta.$$

Then the aim of this paper is the following. We first build up a weak factorization for the Hardy space $H^{p}\dm$, for $p\in \left(\frac{2\lambda + 1}{2\lambda + 2}, 1\right]$, in terms of a bilinear form related to $R_{\Delta_{\lambda}}$. Secondly, as a consequence, we further prove that this weak factorization implies the characterization of the commutator with a symbol in $\lip\dm$, the space of Lipschitz (or H\"older) continuous of order $\a>0$ functions.

Throughout this paper, for any $x,r \in \Rp$, $I(x,r) := (x-r, x+r) \cap \Rp$. From the definition of the measure $\m$, i.e., $d\m(x) := x^{2\lambda} dx$, it is obvious that there exists a positive constant $C\in (1, \infty)$ such that for all $x, r \in \Rp$,
\begin{equation} \label{eq:measure}
	C^{-1} \m(I(x,r)) \leq x^{2\lambda}r + r^{2\lambda + 1} \leq C \m(I(x,r)).	
\end{equation}
Thus $(\Rp, \rho, d\m)$ is a space of homogeneous type in the sense of Coifman and Weiss \cite{CoifmanWeiss1977}, where $\rho(x,y):= \abs{x-y}$ for all $x,y\in \Rp$.  We denote by $\norm{\cdot}_{p}$ the norm of $L^{p}\dm$, for any $0 < p \leq \infty$.
 
 We now state our main result on the weak factorization of the Hardy spaces $H^{p}\dm$, for $p\in \left(\frac{2\lambda + 1}{2\lambda + 2}, 1\right]$.

\begin{theorem} \label{thetheorem}
	Let $p\in \left(\frac{2\lambda + 1}{2\lambda + 2}, 1\right]$ and $q, r >1$ such that
	\begin{equation}\label{eq:cond}
		\frac{1}{p} = \frac{1}{q} + \frac{1}{r}.
	\end{equation}
	For any $f\in H^{p}\dm$, there exists numbers $\{\alpha_{j}^{k}\}_{j,k}$, functions $\{g_{j}^{k}\}_{j,k} \subset L^{q}\dm$ and $\{h_{j}^{k}\}_{j,k} \subset L^{r}\dm$ such that
	\begin{equation}\label{eq:weakrep}
	f = \sum_{k=1}^{\infty} \sum_{j=1}^{\infty} \alpha_{j}^{k} \,\Pi (g_{j}^{k}, h_{j}^{k})
	\end{equation}
	in $H^{p}\dm$ where $\Pi$ is defined as
	$$\Pi(g,h) := g R_{\Delta_{\lambda}}h - h \tld{R_{\Delta_{\lambda}}} g,$$
	where $\tld{R_{\Delta_{\lambda}}}$ is the adjoint operator of $R_{\Delta_{\lambda}}$.
	Moreover, there exists a positive constant $C$ independent of $f$ such that
	\begin{align*}
		C^{-1} \norm{f}_{H^{p}\dm} \leq \inf\Bigg\{ & \pth{\sum_{k=1}^{\infty} \sum_{j=1}^{\infty} \abs{\a_{j}^{k}}^{p} \norm{g_{j}^{k}}_{q}^{p} \norm{h_{j}^{k}}_{r}^{p}}^{\frac{1}{p}} \colon \\
		& \left. f = \sum_{k=1}^{\infty} \sum_{j=1}^{\infty} \alpha_{j}^{k}\, \Pi (g_{j}^{k}, h_{j}^{k}) \right\} \leq C \norm{f}_{H^{p}\dm}.
	\end{align*}
\end{theorem}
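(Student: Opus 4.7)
The plan is to adapt the Uchiyama-type iterative algorithm used in \cite{DuongLiWickYang2015} for the case $p=1$ to the full range $p\in\bigl(\tfrac{2\lambda+1}{2\lambda+2},1\bigr]$, routing the argument through the atomic decomposition of $H^p(dm_\lambda)$ established by Yang and Yang \cite{YangYang2011}.

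For the lower bound $C^{-1}\|f\|_{H^p(dm_\lambda)}\le\inf\{\cdots\}$, which is the easier half, I would first prove the single-term estimate $\|\Pi(g,h)\|_{H^p(dm_\lambda)}\lesssim\|g\|_q\|h\|_r$ for all $g\in L^q(dm_\lambda)$ and $h\in L^r(dm_\lambda)$. The combination $gR_{\Delta_\lambda}h-h\widetilde{R_{\Delta_\lambda}}g$ is designed so that $\int \Pi(g,h)\,dm_\lambda = 0$ via the pairing $\langle R_{\Delta_\lambda}h,g\rangle = \langle h,\widetilde{R_{\Delta_\lambda}}g\rangle$; a compactly supported approximation then shows that $\Pi(g,h)$ is a constant multiple of a $p$-molecule in the sense of Coifman--Weiss on the space of homogeneous type $(\mathbb{R}_+,\rho,dm_\lambda)$, with the molecular size and decay estimates coming from H\"older's inequality together with the $L^q(dm_\lambda)$- and $L^r(dm_\lambda)$-boundedness of $R_{\Delta_\lambda}$ and its adjoint. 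Once this is in hand, the lower bound follows from the $p$-subadditivity of $\|\cdot\|_{H^p(dm_\lambda)}^p$ applied term-by-term to the representation \eqref{eq:weakrep}.

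For the upper bound $\inf\{\cdots\}\le C\|f\|_{H^p(dm_\lambda)}$ I reduce, via the Yang--Yang atomic decomposition, to the case where $f$ is a single $(p,q_0)$-atom $a$ supported on an interval $I_0=I(x_0,r_0)$, and aim to produce $a=\sum_{k\ge 1}\alpha_k\,\Pi(g_k,h_k)$ with $\bigl(\sum_k|\alpha_k|^p\|g_k\|_q^p\|h_k\|_r^p\bigr)^{1/p}\lesssim 1$ uniformly in $a$. At the initial step I would choose a reference interval $\widetilde{I}_0$ of comparable radius placed far from $I_0$ on which the kernel $R_{\Delta_\lambda}(x,y)$ admits a definite pointwise lower bound for $x\in\widetilde{I}_0$ and $y\in I_0$; this plays the role for the Bessel Riesz transform of Uchiyama's kernel non-degeneracy hypothesis and is available from the explicit formulas for the kernel exploited in \cite{DuongLiWickYang2015,BetancorDziubanskiTorrea2009}. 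Taking $h_1$ proportional to $a$ and $g_1$ a normalized bump on $\widetilde{I}_0$, the coefficient $\alpha_1$ is chosen so that $\alpha_1\Pi(g_1,h_1)$ captures the leading behavior of $a$ on $I_0\cup\widetilde{I}_0$; one then shows that the residue $a-\alpha_1\Pi(g_1,h_1)$ is, up to a geometrically small constant, again a multiple of an atom supported on an enlarged interval (plus, if needed, a small tail itself in $H^p(dm_\lambda)$ with controlled norm). Iteration delivers the full series, and the outer sum in \eqref{eq:weakrep} simply concatenates this construction over the atoms of $f$.

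The main obstacle will be securing a geometric contraction factor strictly less than one at each iteration, which is what forces the series to converge and yields the quantitative norm bound. For $p<1$ this requires matching the vanishing-moment cancellation of the Yang--Yang $(p,q_0)$-atoms (which is of order greater than zero) against a Taylor expansion of the Bessel Riesz kernel around the center of $I_0$ to an order dictated by $p$ and $\lambda$, so as to extract a decay factor of the form $\bigl(r_0/\mathrm{dist}(I_0,\widetilde{I}_0)\bigr)^{s}$ with $s$ large enough to dominate the $p$-subadditive sum in the iteration. Because the Bessel measure $x^{2\lambda}\,dx$ is not translation invariant, the requisite kernel smoothness and decay estimates must be derived directly in the Bessel geometry from the representations in \cite{MuckenhouptStein1965,BetancorDziubanskiTorrea2009}, rather than imported from the Euclidean theory. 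This is the technical heart of the argument and the place where the Uchiyama scheme of \cite{DuongLiWickYang2015}, which needs only a first-order cancellation at $p=1$, must be genuinely strengthened.
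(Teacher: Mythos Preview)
Your overall Uchiyama-style iteration is on the right track for the upper bound, but two points diverge from the paper in ways that matter.

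\textbf{Lower bound.} You propose to show $\Pi(g,h)$ is a $p$-molecule. For general $g\in L^{q}$ and $h\in L^{r}$ (with no localization), $\Pi(g,h)$ carries no concentration around a ball, so the molecular route does not go through without substantial additional approximation arguments that you leave unspecified. The paper instead uses duality: since $\bigl(H^{p}(dm_\lambda)\bigr)^*=\mathrm{Lip}_{\alpha}(dm_\lambda)$ with $\alpha=\tfrac1p-1$, one has $\|\Pi(g,h)\|_{H^p}=\sup_{\|b\|_{\mathrm{Lip}_\alpha}=1}\bigl|\langle [b,R_{\Delta_\lambda}]h,\,g\rangle\bigr|$, and then $|[b,R_{\Delta_\lambda}]h|\le \|b\|_{\mathrm{Lip}_\alpha}\,I_\alpha^{+}|h|$ together with the $L^{r}\!\to\! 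L^{q'}$ boundedness of the positive fractional integral $I_\alpha^{+}$ on the space of homogeneous type $(\mathbb{R}_+,dm_\lambda)$ finishes the job. This is both shorter and avoids the localization issue entirely.

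\textbf{Upper bound.} Your diagnosis of the ``main obstacle'' is off. In the range $p\in\bigl(\tfrac{2\lambda+1}{2\lambda+2},1\bigr]$ the Yang--Yang $p$-atoms have only the single cancellation $\int a\,dm_\lambda=0$; there are no higher vanishing moments, and no Taylor expansion of the kernel beyond the standard first-order H\"older regularity (Proposition~\ref{rieszcalderon}\,\eqref{rieszcalderon2}) is used or needed. The genuine new ingredient for $p<1$ is a two-bump $H^{p}$ estimate (Lemma~\ref{lemma1} in the paper): if $f$ has mean zero and $|f|\le C_1\chi_{I(x_1,r)}+C_2\chi_{I(x_2,r)}$ with $|x_1-x_2|\ge 4r$, then
\[
\|f\|_{H^p}^{p}\ \lesssim\ \Bigl(\tfrac{|x_1-x_2|}{r}\Bigr)^{1-p}\log_2\!\Bigl(\tfrac{|x_1-x_2|}{r}\Bigr)\bigl[C_1^{p}m_\lambda(I(x_1,r))+C_2^{p}m_\lambda(I(x_2,r))\bigr].
\]
Choosing $g=\chi_{I(y_0,r)}$ on a companion interval at distance $\sim Mr$ and $h=-a/\widetilde{R_{\Delta_\lambda}}g(x_0)$, the first-order kernel regularity gives $C_1,C_2\sim M^{-1}$ (after normalization), so the lemma yields $\|a-\Pi(g,h)\|_{H^p}^{p}\lesssim M^{1-p}(\log_2 M)\cdot M^{-p}=\tfrac{\log_2 M}{M^{2p-1}}$, which can be made $<\varepsilon^{p}$ because $p>\tfrac12$. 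This is where the restriction on $p$ actually enters, not through higher-order cancellation.

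Finally, the residue $a-\Pi(g,h)$ is not claimed to be (a multiple of) an atom on an enlarged interval; it is simply an element of $H^{p}$ with norm $<\varepsilon$, which one then re-atomizes via the Yang--Yang decomposition before iterating. The outer sum over $k$ in \eqref{eq:weakrep} records these successive re-atomizations, and convergence follows from $\varepsilon C<1$.
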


Note that the case $p=1$ is exactly as Duong, Li, Wick and Yang \cite{DuongLiWickYang2015} did, so our contribution here is the cases of $p\in \left(\frac{2\lambda + 1}{2\lambda + 2}, 1\right)$.

As a corollary we get the following second main result that provides a characterization of the $\lip\dm$ space in terms of the boundedness of the commutators adapted to the Riesz transform $R_{\Delta_{\lambda}}$. Recall the definition of the $\lip\dm$ space associated with the Bessel operator, which is the dual space of $H^{p}\dm$, for $\a = \frac{1}{p} -1$, see \cite[Theorem B]{CoifmanWeiss1977}.
\begin{definition}[{\cite[p.591]{CoifmanWeiss1977}}]
Let $\a>0$. The space $\lip\dm$ is defined as the set of functions $f$ measurable on $\dm$ which satisfy
\begin{equation*}\label{eq:lip}
	\abs{f(x)-f(y)} \leq C m_\lambda(I)^{\a},
\end{equation*}
where $I$ is any interval containing both $x$ and $y$ and $C>0$ is a constant independent of $x$ and $y$. The greatest lower bound of these constants is denoted by $\norm{f}_{\lip\dm}$.
\end{definition}

Suppose $b\in L^{1}_{loc}\dm$ and $f\in L^{p}\dm$, for $p\in (1, \infty)$. Let $[b, R_{\Delta_{\lambda}}]$ be the commutator defined by
$$[b, R_{\Delta_{\lambda}}] f(x) := b(x) R_{\Delta_{\lambda}} f(x) - R_{\Delta_{\lambda}} (bf)(x).$$

\begin{theorem} \label{corollary}
	Let $b\in L_{loc}^{1}\dm$ and $1 < p < q <\infty$ and $0< \a < \frac{1}{2\lambda +1}$ such that
	 $$\a = \frac{1}{p} - \frac{1}{q}.$$
	\begin{enumerate}[(1)]
		\item If $b\in \lip\dm$, then the commutator $[b, R_{\Delta_{\lambda}}]$ is bounded from $L^{p}\dm$ to $L^{q}\dm$ with the operator norm \label{corollary1}
		$$\norm{[b, R_{\Delta_{\lambda}}]}_{L^{p}\dm\to L^{q}\dm} \leq C \norm{b}_{\lip\dm},$$
		where $C>0$ is a constant independent of $b$.
		\item If $[b, R_{\Delta_{\lambda}}]$ is bounded from $L^{p}\dm$ to $L^{q}\dm$, then $b \in \lip\dm$ and
		$$\norm{b}_{\lip\dm} \leq C \norm{[b, R_{\Delta_{\lambda}}]}_{L^{p}\dm\to L^{q}\dm},$$
		where $C>0$ is another constant independent of $b$. \label{corollary2}
	\end{enumerate}
\end{theorem}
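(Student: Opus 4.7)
For part (1), my plan is to use the standard kernel representation
$$[b, R_{\Delta_{\lambda}}]f(x) = \int_{0}^{\infty} \pth{b(x) - b(y)} K(x,y) f(y) \, d\m(y),$$
where $K$ is the Calder\'on--Zygmund kernel of $R_{\Delta_{\lambda}}$ in the homogeneous space $(\Rp, \rho, d\m)$. Combining the size estimate $\abs{K(x,y)} \lesssim \m(I(x, \abs{x-y}))^{-1}$ with the Lipschitz control $\abs{b(x) - b(y)} \leq \norm{b}_{\lip\dm} \m(I(x, \abs{x-y}))^{\a}$ yields the pointwise bound
$$\abs{[b, R_{\Delta_{\lambda}}]f(x)} \lesssim \norm{b}_{\lip\dm} \int_{0}^{\infty} \frac{\abs{f(y)}}{\m(I(x, \abs{x-y}))^{1-\a}} \, d\m(y).$$
The right-hand side is a fractional integral of order $\a$ on the space of homogeneous type $(\Rp, \rho, d\m)$, and its boundedness from $L^{p}\dm$ to $L^{q}\dm$ for $\a = \frac{1}{p} - \frac{1}{q}$ is the Hardy--Littlewood--Sobolev inequality in this setting.

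For part (2), I would rely on Theorem \ref{thetheorem} together with the Coifman--Weiss duality $(H^{p_{0}}\dm)^{*} = \lip\dm$ valid for $\a = 1/p_{0} - 1$. Setting $p_{0} := 1/(1+\a)$, the hypothesis $\a < 1/(2\lambda + 1)$ is exactly equivalent to $p_{0} \in \pth{\frac{2\lambda+1}{2\lambda+2}, 1}$, the range in which Theorem \ref{thetheorem} applies, while $1/p_{0} = 1/q' + 1/p$ pins down the exponents of the factors. A symmetric use of the adjoint relation between $R_{\Delta_{\lambda}}$ and $\tld{R_{\Delta_{\lambda}}}$ produces the identity
$$\int_{0}^{\infty} \Pi(g,h)\, b\, d\m = \frac{1}{2}\pth{\int_{0}^{\infty} g \,[b, R_{\Delta_{\lambda}}]h\, d\m - \int_{0}^{\infty} h \,[b, \tld{R_{\Delta_{\lambda}}}]g\, d\m}.$$
Since the operator norms of $[b, R_{\Delta_{\lambda}}]\colon L^{p}\dm \to L^{q}\dm$ and of its adjoint $[b, \tld{R_{\Delta_{\lambda}}}]\colon L^{q'}\dm \to L^{p'}\dm$ coincide, H\"older's inequality bounds the left-hand side by $C \norm{[b, R_{\Delta_{\lambda}}]}_{L^{p}\dm \to L^{q}\dm} \norm{g}_{q'} \norm{h}_{p}$.

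Plugging a representation $f = \sum_{k,j}\a_{j}^{k}\Pi(g_{j}^{k}, h_{j}^{k})$ of $f \in H^{p_{0}}\dm$ from Theorem \ref{thetheorem} into this estimate, using the elementary inequality $\sum a_{j} \leq \pth{\sum a_{j}^{p_{0}}}^{1/p_{0}}$ (valid for nonnegative reals and $p_{0} \in (0,1]$), and taking the infimum over factorizations yields
$$\abs{\int_{0}^{\infty} fb\, d\m} \lesssim \norm{[b, R_{\Delta_{\lambda}}]}_{L^{p}\dm \to L^{q}\dm} \norm{f}_{H^{p_{0}}\dm}$$
on a suitable dense subclass of $H^{p_{0}}\dm$. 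The Coifman--Weiss duality then delivers $b \in \lip\dm$ with the stated norm control. The main technical hurdles I anticipate are (i) pinning down the Bessel Riesz kernel size estimate in the form adapted to the weighted measure $\m$ (standard, but requiring careful bookkeeping), and (ii) justifying the algebraic identity and the interchange of summation with integration on a sufficiently rich class of $g$ and $h$ (for instance bounded and compactly supported in $(0,\infty)$) so that all adjoint moves and Fubini applications are legitimate before extending by density.
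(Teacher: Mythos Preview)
Your proposal is correct and follows essentially the same route as the paper: part (1) is handled exactly as in the proof of Proposition~\ref{prop:pibounded} (pointwise domination of the commutator by the positive fractional integral $I_{\alpha}^{+}$, then the Hardy--Littlewood--Sobolev inequality on the homogeneous space, cited there from Bramanti--Cerutti), and part (2) proceeds via Theorem~\ref{thetheorem} and the Coifman--Weiss duality with the same exponent $t=p_{0}$ satisfying $1/t=1/p+1/q'$. The only cosmetic difference is that the paper uses the direct identity $\langle b,\Pi(g,h)\rangle=\langle g,[b,R_{\Delta_{\lambda}}]h\rangle$, which avoids your symmetric splitting and the appeal to the adjoint commutator; both unwindings are valid and yield the same estimate.
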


The proofs of Theorems~\ref{thetheorem} and \ref{corollary} are shown via the following arguments. We first provide some preliminary results that we need for the proof of \thmref{thetheorem}, and the case \eqref{corollary1} of \thmref{corollary} is also proved before hand using similar ideas as \cite{DuongLiWickYang2015}. Once we have \thmref{thetheorem}, the second part \eqref{corollary2} of \thmref{corollary} follows from that.

The structure of the paper is as follows. In Section~\ref{preliminaries} we recall all the preliminary results from the literature that we will need later on. For example, we recall the Hardy spaces associated with $\Delta_{\lambda}$ and also we collect some fundamental estimates of the kernel of the Riesz transform $R_{\Delta_{\lambda}}$. In Section~\ref{proof} we prove Theorems \ref{thetheorem} and \ref{corollary}. In addition we also show some new lemmas that are the key to prove our main results. Finally, in Section~\ref{comments} we explain some difficulties we have encountered and open problems related to that.

Throughout the paper, if $f \lesssim g$ or $g \gtrsim f$ denote that there exists a constant $C$ independent of $f$ and $g$ such that $f \leq C g$. When $f \lesssim g$ and $g \lesssim f$ we write $f\simeq g$.

%
%

\section{Preliminaries} \label{preliminaries}

In this section we recall the preliminary notions that we need. More concretely, we recall the Hardy spaces and Riesz transform related to the Bessel operator $\Delta_{\lambda}$ from \cite{MuckenhouptStein1965,BetancorDziubanskiTorrea2009}.

We now recall the atomic characterization of the Hardy spaces $H^{p}\dm$, for $p\in \left(\frac{2\lambda + 1}{2\lambda + 2}, 1\right]$ in \cite{YangYang2011}.

\begin{definition}
	Let $p\in  \left(\frac{2\lambda + 1}{2\lambda + 2}, 1\right]$. A function $a$ is called an $H^{p}\dm$-atom (or simply a $p$-atom) if there exists an open bounded interval $I \subset \Rp$ such that $\supp a \subset I$, $\norm{a}_{\infty} \leq \m(I)^{-1/p}$ and $\int_{0}^{\infty} a(x) d\m(x) = 0$.
\end{definition}

Let $p\in  \left(\frac{2\lambda + 1}{2\lambda + 2}, 1\right]$. We point out that from \cite{YangYang2011}, the Hardy space $H^{p}\dm$ can be characterized via an atomic decomposition. That is, an $L^{p}\dm$ function $f\in H^{p}\dm$ if and only if
$$f = \sum_{k=1}^{\infty} \a_{k} a_{k} \quad \mathrm{in} \quad L^{p}\dm,$$
where for every $k$, $a_{k}$ is a $p$-atom and $\a_{k} \in \R$ satisfying that $\sum_{k=1}^{\infty} \abs{\a_{k}}^{p} < \infty$. Moreover, 
$$\norm{f}_{H^{p}\dm} \simeq \inf\left\{\pth{\sum_{k=1}^{\infty}\abs{\a_{k}}^{p}}^{\frac{1}{p}}\right\},$$
where the infimum is taken over all the decompositions of $f$ as above. 

We also note that $H^{p}\dm$ can also be characterized in terms of the radial maximal function, the nontangential maximal function, the grand maximal function, the Littlewood-Paley $g$-function and the Lusin-area function. More interesting in our case, we have also the Riesz transform characterization of these Hardy spaces; see \cite[Theorem 1.2]{YangYang2011}.

Next we recall the Poisson integral, the conjugate Poisson integral and the properties of the Riesz transforms. As in \cite{BetancorDziubanskiTorrea2009,DuongLiWickYang2015,YangYang2011}, let $\{P_{t}^{\lambda}\}_{t>0}$ be the Poisson semigroup $\{\ee^{-t \sqrt{\Delta_{\lambda}}}\}_{t>0}$ defined by
$$P_{t}^{[\lambda]} f(x) := \int_{0}^{\infty} P_{t}^{[\lambda]}(x,y) f(y) y^{2\lambda} dy,$$
where
$$P_{t}^{[\lambda]}(x,y) = \int_{0}^{\infty} \ee^{-tz}(xz)^{-\lambda+\frac{1}{2}} J_{\lambda-\frac{1}{2}}(xz)(yz)^{-\lambda+\frac{1}{2}}J_{\lambda-\frac{1}{2}}(yz) z^{2\lambda}dz$$
and $J_{\nu}$ is the Bessel function of the first kind and order $\nu$. Weinstein \cite{Weinstein1948} established the following formula for $P_{t}^{[\lambda]} (x,y)$: for $t, x, y \in \Rp$,
$$P_{t}^{[\lambda]}(x,y) = \frac{2\lambda t}{\pi} \int_{0}^{\pi} \frac{(\sin \theta)^{2\lambda-1}}{(x^{2}+y^{2}+t^{2}-2xy\cos \theta)^{\lambda+1}} d\theta.$$
The $\Delta_{\lambda}$-conjugate of the Poisson integral of $f$ is defined by
$$Q_{t}^{[\lambda]} f(x) := \int_{0}^{\infty} Q_{t}^{[\lambda]} (x,y) f(y) y^{2\lambda} dy,$$
where 
$$Q_{t}^{[\lambda]}(x,y) = -\frac{2\lambda}{\pi} \int_{0}^{\pi} \frac{(x-y\cos\theta)(\sin\theta)^{2\lambda-1}}{(x^{2}+y^{2}+t^{2}-2xy\cos \theta)^{\lambda+1}} d\theta.$$
From this, we deduce that for any $x,y\in \Rp$,
$$R_{\Delta_{\lambda}}(x,y) = \partial_{x} \int_{0}^{\infty} P_{t}^{[\lambda]}(x,y) dt = -\frac{2\lambda}{\pi} \int_{0}^{\pi} \frac{(x-y\cos\theta)(\sin\theta)^{2\lambda-1}}{(x^{2}+y^{2}+t^{2}-2xy\cos \theta)^{\lambda+1}} d\theta = \lim_{t\to 0} Q_{t}^{[\lambda]} (x,y).$$

We note that, as indicated in \cite{BetancorDziubanskiTorrea2009}, this Riesz transform $R_{\Delta_{\lambda}}$ is a standard Calder\'on-Zygmund operator. We recall that if $R_{\Delta_{\lambda}}
(x,y)$ is the kernel of the Riesz transform $R_{\Delta_{\lambda}}$ then for any measurable function $f$ in $\dm$,
$$R_{\Delta_{\lambda}} f(x) = \int_{\Rp} f(y) R_{\Delta_{\lambda}}(x,y) d\m(y), \qquad x\in \Rp.$$
\begin{proposition}[{\cite[Proposition 2.2]{DuongLiWickYang2015}}]\label{rieszcalderon}
The kernel $R_{\Delta_{\lambda}}(x,y)$ satisfies the following conditions:
\begin{enumerate}[(i)]
	\item For every $x, y\in \Rp$ with $x\not = y$, \label{rieszcalderon1}
	$$\abs{R_{\Delta_{\lambda}} (x,y)} \lesssim \frac{1}{\m(I(x, \abs{x-y}))};$$
	\item For every $x, x_{0}, y \in \Rp$ with $\abs{x_{0} - x} < \abs{x_{0}-y}/2$, \label{rieszcalderon2}
	$$\abs{R_{\Delta_{\lambda}}(y,x_{0}) - R_{\Delta_{\lambda}}(y,x)} \lesssim \frac{\abs{x_{0}-x}}{x_{0}-y} \frac{1}{\m(I(x_{0}, \abs{x_{0}-y}))}.$$
\end{enumerate}
\end{proposition}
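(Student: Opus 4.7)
The plan is to start from the explicit integral representation
\[
R_{\Delta_\lambda}(x,y) = -\frac{2\lambda}{\pi}\int_0^\pi \frac{(x-y\cos\theta)(\sin\theta)^{2\lambda-1}}{(x^2+y^2-2xy\cos\theta)^{\lambda+1}}\,d\theta,
\]
obtained by sending $t\to 0$ in the formula for $Q_t^{[\lambda]}(x,y)$, and reduce both (i) and (ii) to the single master estimate
\[
\mathcal{I}_\alpha(x,y) := \int_0^\pi \frac{(\sin\theta)^{2\lambda-1}}{(x^2+y^2-2xy\cos\theta)^{\alpha}}\,d\theta \,\lesssim\, \frac{1}{(x+y)^{2\lambda-2\alpha}\,(|x-y|+\text{something})^{?}}
\]
matched against $m_\lambda(I(x,|x-y|))\simeq x^{2\lambda}|x-y|+|x-y|^{2\lambda+1}$ via \eqref{eq:measure}.

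For part \eqref{rieszcalderon1}, I first bound the numerator using the elementary inequality $(x-y\cos\theta)^2 \le x^2-2xy\cos\theta+y^2$, which gives $|x-y\cos\theta|\le (x^2+y^2-2xy\cos\theta)^{1/2}$, and hence $|R_{\Delta_\lambda}(x,y)|\lesssim \mathcal{I}_{\lambda+1/2}(x,y)$. Writing $x^2+y^2-2xy\cos\theta = (x-y)^2+2xy(1-\cos\theta)$ and splitting the integration into the ranges where $2xy(1-\cos\theta)\le (x-y)^2$ and where it dominates, then using $1-\cos\theta\simeq \theta^2$ on $[0,\pi/2]$ and the obvious bound on $[\pi/2,\pi]$, I obtain $\mathcal{I}_{\lambda+1/2}(x,y)\lesssim |x-y|^{-1}(x+y)^{-2\lambda}$ after considering the two regimes $x\simeq y$ and $|x-y|\gtrsim \max(x,y)$ separately; in either case, the bound matches $m_\lambda(I(x,|x-y|))^{-1}$ by \eqref{eq:measure}.

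For part \eqref{rieszcalderon2}, I apply the mean value theorem in the second variable:
\[
R_{\Delta_\lambda}(y,x_0)-R_{\Delta_\lambda}(y,x) = (x_0-x)\,\partial_\xi R_{\Delta_\lambda}(y,\xi)
\]
for some $\xi$ between $x$ and $x_0$. The hypothesis $|x-x_0|<|x_0-y|/2$ ensures $|\xi-y|\simeq |x_0-y|$ and $\xi+y\simeq x_0+y$, so it remains to prove the pointwise bound
\[
|\partial_\xi R_{\Delta_\lambda}(y,\xi)|\lesssim \frac{1}{|\xi-y|\,m_\lambda(I(\xi,|\xi-y|))}.
\]
Differentiating the integrand of the kernel formula in $\xi$ produces two terms: one where $\partial_\xi$ hits the factor $(y-\xi\cos\theta)$, yielding $-\cos\theta$ in the numerator, and one where it hits the denominator, yielding a factor of $2(\lambda+1)(\xi-y\cos\theta)/(y^2+\xi^2-2y\xi\cos\theta)$. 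After bounding $|\cos\theta|\le 1$ and $|\xi-y\cos\theta|\le (y^2+\xi^2-2y\xi\cos\theta)^{1/2}$, both contributions are dominated by $\mathcal{I}_{\lambda+3/2}(y,\xi)$, which by the same type of estimation as in (i) gives an extra factor of $|\xi-y|^{-1}$ compared with $\mathcal{I}_{\lambda+1/2}$. Combining with the size bound and the comparability $m_\lambda(I(\xi,|\xi-y|))\simeq m_\lambda(I(x_0,|x_0-y|))$ yields the claim.

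The main obstacle is the careful evaluation of $\mathcal{I}_{\lambda+1/2}(x,y)$ so that the outcome matches the homogeneous-type measure $m_\lambda(I(x,|x-y|))$ in both regimes $x\simeq y$ (where $x^{2\lambda}|x-y|$ dominates) and $|x-y|\gtrsim x+y$ (where $|x-y|^{2\lambda+1}$ dominates); the splitting according to whether $(x-y)^2$ or $2xy(1-\cos\theta)$ controls the denominator, together with the $\theta\to 0$ asymptotics $(\sin\theta)^{2\lambda-1}\simeq \theta^{2\lambda-1}$, is what makes the two regimes collapse into the single clean bound stated in the proposition.
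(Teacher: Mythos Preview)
The paper does not give its own proof of this proposition; it is simply quoted from \cite[Proposition 2.2]{DuongLiWickYang2015}, so there is nothing in the present paper to compare your argument against. Your outline via the explicit integral formula, the elementary bound $(x-y\cos\theta)^2\le x^2+y^2-2xy\cos\theta$, and the split of $\mathcal{I}_\alpha$ according to whether $(x-y)^2$ or $2xy(1-\cos\theta)$ dominates is the standard route and is essentially correct; this is in fact the approach taken in the cited reference.

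One small inaccuracy in your sketch for part \eqref{rieszcalderon2}: after differentiating, the term coming from $\partial_\xi(y-\xi\cos\theta)=-\cos\theta$ is only controlled by $\mathcal{I}_{\lambda+1}(y,\xi)$, not $\mathcal{I}_{\lambda+3/2}(y,\xi)$, since there is no extra factor of $(y^2+\xi^2-2y\xi\cos\theta)^{1/2}$ to cancel. Likewise, in the second term you can bound both linear factors $|y-\xi\cos\theta|$ and $|\xi-y\cos\theta|$ by $(y^2+\xi^2-2y\xi\cos\theta)^{1/2}$, which again lands you at $\mathcal{I}_{\lambda+1}$. This is harmless: your own computation of $\mathcal{I}_\alpha$ in the regime $\xi\simeq y$ gives $\mathcal{I}_{\lambda+1}(y,\xi)\lesssim \xi^{-2\lambda}|\xi-y|^{-2}\simeq |\xi-y|^{-1}\,m_\lambda(I(\xi,|\xi-y|))^{-1}$, which is exactly the bound you need. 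So the conclusion stands; only the intermediate index should read $\lambda+1$ rather than $\lambda+3/2$.
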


Next we recall the following estimates of the kernel $R_{\Delta_{\lambda}}(x,y)$ of the Riesz transform $R_{\Delta_{\lambda}}$, which will be used later on.

\begin{proposition} \label{rieszestimate}
	The Riesz kernel $R_{\Delta_{\lambda}}(x,y)$ satisfies:
	\begin{enumerate}[(i)]
		\item {\rm \cite[Proposition 2.3]{DuongLiWickYang2015}} There exist $K_{1}\in (0,1)$ small enough and a positive constant $C_{K_{1}, \lambda}$ such that for any $x, y \in \Rp$ with $y < K_{1}x$, \label{rieszestimate1}
		$$R_{\Delta_{\lambda}}(x,y) \leq -C_{K_{1}, \lambda} \frac{1}{x^{2\lambda+1}}.$$
		\item {\rm \cite[Remark 2.4]{DuongLiWickYang2015}} There exist ${K}_{2}\in (0, 1/2)$ small enough and a positive constant $C_{K_{2}, \lambda}$ such that for any $x, y \in \Rp$ with $0 < y/x - 1 < {K}_{2}$, \label{rieszestimate2}
\begin{align*}
	R_{\Delta_{\lambda}}(x,y) & \geq \frac{1}{\pi} \frac{1}{x^{\lambda}y^{\lambda}}\frac{1}{x-y} - C_{K_{2}, \lambda} \frac{1}{x^{2\lambda+1}}\pth{1+\log_{+}\frac{\sqrt{xy}}{\abs{x-y}}} \\
	& \geq C_{{K}_{2},\lambda} \frac{1}{x^{\lambda}y^{\lambda}} \frac{1}{y-x},
\end{align*}
	where $\log_{+} (x) := \max\{0, \log(x)\}$.
	\end{enumerate}
\end{proposition}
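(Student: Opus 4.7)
Both bounds follow from direct asymptotic analysis of the explicit representation
\[
R_{\Delta_{\lambda}}(x,y) = -\frac{2\lambda}{\pi}\int_{0}^{\pi}\frac{(x-y\cos\theta)(\sin\theta)^{2\lambda-1}}{D(\theta)^{\lambda+1}}\,d\theta,
\]
where $D(\theta):=x^{2}+y^{2}-2xy\cos\theta=(x-y)^{2}+2xy(1-\cos\theta)$, obtained from Weinstein's formula by sending $t\to 0$.

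For part (i), when $y<K_{1}x$ the integrand is uniformly well-behaved in $\theta$. The trivial bounds $(x-y)^{2}\le D(\theta)\le (x+y)^{2}$ give $D(\theta)\simeq x^{2}$ uniformly on $[0,\pi]$ provided $K_{1}$ is small, while $x-y\cos\theta\ge x-y\ge (1-K_{1})x\simeq x$. The integrand is therefore bounded below by $c(\sin\theta)^{2\lambda-1}/x^{2\lambda+1}$, and $\int_{0}^{\pi}(\sin\theta)^{2\lambda-1}\,d\theta$ is a definite positive constant. The overall minus sign in front then delivers $R_{\Delta_{\lambda}}(x,y)\le -C_{K_{1},\lambda}/x^{2\lambda+1}$.

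For part (ii), the integrand is singular at $\theta=0$ and, thanks to the minus sign, produces a large positive principal term. I would split $[0,\pi]=[0,\theta_{0}]\cup[\theta_{0},\pi]$ for a threshold $\theta_{0}$ chosen just large enough that $D(\theta)\simeq xy\theta^{2}$ captures the singular behavior on $[0,\theta_{0}]$. On $[\theta_{0},\pi]$ the denominator stays comparable to $xy\simeq x^{2}$, so this piece contributes at most $O(x^{-(2\lambda+1)})$ and is absorbed into the error. On $[0,\theta_{0}]$ I would use the Taylor expansions $1-\cos\theta=\theta^{2}/2+O(\theta^{4})$ and $\sin\theta=\theta+O(\theta^{3})$, substitute $\theta=(y-x)s/\sqrt{xy}$, and recognize the model integral
\[
\frac{2\lambda}{\pi(xy)^{\lambda}(y-x)}\int_{0}^{\infty}\frac{s^{2\lambda-1}}{(1+s^{2})^{\lambda+1}}\,ds = \frac{1}{\pi (xy)^{\lambda}(y-x)},
\]
where the last equality is a direct beta-function computation. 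This yields the main term claimed in the first displayed inequality of (ii). The remaining Taylor errors, integrated against $\theta^{2\lambda-1}/((y-x)^{2}+xy\theta^{2})^{\lambda+1}$, produce the announced logarithmic defect $C_{\lambda}\,x^{-(2\lambda+1)}\log_{+}(\sqrt{xy}/|x-y|)$, essentially because $\int_{1}^{\theta_{0}\sqrt{xy}/(y-x)}ds/s\simeq\log_{+}(\sqrt{xy}/|x-y|)$.

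The second inequality in (ii) then follows by choosing $K_{2}$ small enough that $|x-y|/\sqrt{xy}$ is a bounded fraction of $1$, so the genuine $1/|x-y|$ singularity of $(xy)^{-\lambda}/(y-x)$ dominates the merely logarithmic correction. The technical heart of the argument, and the step I expect to be the main obstacle, is the careful bookkeeping of the Taylor remainders on $[0,\theta_{0}]$: one must simultaneously (a) extract the exact constant $1/\pi$ in the leading term, (b) check that the $O(\theta^{2})$ correction to the numerator and the $O(\theta^{4})$ correction to the denominator integrate only to a logarithm rather than to a stronger power, and (c) verify that the chosen $\theta_{0}$ matches the boundary terms from the $[\theta_{0},\pi]$ piece. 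This three-way matching is where the Bessel setting departs most visibly from a naive Euclidean computation.
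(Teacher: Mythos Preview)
The paper does not give its own proof of this proposition; both parts are quoted from \cite{DuongLiWickYang2015} (Proposition~2.3 and Remark~2.4 there), so there is nothing in the present paper to compare your argument against beyond the citation.

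That said, your direct attack from the integral formula is the natural one and is essentially correct. Part~(i) is immediate once you observe $D(\theta)\simeq x^{2}$ and $x-y\cos\theta\ge(1-K_{1})x$ uniformly in $\theta$. For part~(ii), your splitting at a fixed $\theta_{0}$, the substitution $\theta=(y-x)s/\sqrt{xy}$, and the beta identity $\int_{0}^{\infty}s^{2\lambda-1}(1+s^{2})^{-\lambda-1}\,ds=1/(2\lambda)$ correctly isolate the principal term $\bigl(\pi(xy)^{\lambda}(y-x)\bigr)^{-1}$; the logarithmic defect comes, as you say, from the $y\theta^{2}/2$ correction to the numerator, whose contribution behaves like $\int^{\theta_{0}}\theta^{-1}\,d\theta$ after the denominator is replaced by $(xy\theta^{2})^{\lambda+1}$ in the range $\theta\gtrsim(y-x)/\sqrt{xy}$. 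The far piece $[\theta_{0},\pi]$ is $O(x^{-(2\lambda+1)})$ by the crude bound $D(\theta)\simeq xy$, so the matching at $\theta_{0}$ is harmless.

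One minor observation: under the hypothesis $0<y/x-1<K_{2}$ one has $y>x$, so the factor $1/(x-y)$ in the first displayed line of~(ii) is negative, whereas your computation (and the final lower bound) produce the positive quantity $1/(y-x)$. This looks like a sign slip in the statement as printed, not a flaw in your plan.
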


%
%

\section{Proof of Main Result} \label{proof}

In this section we will provide the details of the main results of this article, the proof of Theorems~\ref{thetheorem} and~\ref{corollary}. Before that, we need some preliminary lemmas and results that we will use later on.

\begin{lemma} \label{lemma1}
	Let $f$ be a functions satisfying the following conditions:
	\begin{enumerate}[(i)]
		\item $\int_{0}^{\infty} f(x) x^{2\lambda} \d x = 0$;
		\item there exists $I(x_{1},r)$ and $I(x_{2},r)$ for some $x_{1}, x_{2} , r\in \R_{+}$ and $C_{1}, C_{2} >0$ such that
		$$\abs{f(x)} \leq C_{1} \chi_{I(x_{1},r)}(x) + C_{2} \chi_{I(x_{2},r)} (x);$$
		\item $\abs{x_{1}-x_{2}} \geq 4r.$
	\end{enumerate}
	Then there exists $C>0$ (independent of $x_{1}, x_{2}, r, C_{1}, C_{2}$) such that
	$$\norm{f}_{H^{p}\dm}^{p} \leq C \pth{\frac{\abs{x_{1}-x_{2}}}{r}}^{1-p} \log_{2} \left(\frac{\abs{x_{1}-x_{2}}}{r}\right) [ C_{1}^{p} m_{\lambda}(I(x_{1},r)) + C_{2}^{p} m_{\lambda}(I(x_{2},r)) ],$$
	for any $p\in  \left(\frac{2\lambda + 1}{2\lambda + 2}, 1\right]$.
\end{lemma}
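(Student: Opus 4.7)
The approach is to split $f$ via a mean-transfer construction and decompose the transfer through a chain of $\lceil\log_2(d/r)\rceil$ atoms, where $d := |x_1-x_2|$. Denote $I_i := I(x_i,r)$ and $c := \int_{I_1} f\, d\m$; by hypothesis (i) and the disjointness from (iii), $\int_{I_2} f\, d\m = -c$. Setting
\begin{align*}
a_1 &:= f\chi_{I_1} - c\,\chi_{I_1}/\m(I_1), \\
a_2 &:= f\chi_{I_2} + c\,\chi_{I_2}/\m(I_2), \\
G &:= \chi_{I_1}/\m(I_1) - \chi_{I_2}/\m(I_2),
\end{align*}
one has $f = a_1 + a_2 + c\,G$. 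Each $a_i$ is mean-zero, supported in $I_i$, and satisfies $\|a_i\|_\infty \leq 2C_i$; thus it is a multiple of a $p$-atom contributing at most $C\,C_i^p\,\m(I_i)$ to $\|f\|_{H^p\dm}^p$. The two estimates $|c|\leq C_1\m(I_1)$ and $|c|\leq C_2\m(I_2)$ are both available and will be used selectively below.

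For the term $c\,G$, assume WLOG $x_1 < x_2$, so $x_2 \geq d$ since $x_1 > 0$. Let $N := \lceil \log_2(d/r)\rceil$ and define the two doubling chains $J_k := I(x_1, 2^k r)$, $K_k := I(x_2, 2^k r)$ for $k=0,\ldots,N$, so $J_N, K_N \supset I_1\cup I_2$ and overlap; set $L := J_N\cup K_N$. Telescoping yields
\begin{align*}
G &= \sum_{k=0}^{N-1}\Big(\frac{\chi_{J_k}}{\m(J_k)}-\frac{\chi_{J_{k+1}}}{\m(J_{k+1})}\Big) - \sum_{k=0}^{N-1}\Big(\frac{\chi_{K_k}}{\m(K_k)}-\frac{\chi_{K_{k+1}}}{\m(K_{k+1})}\Big) \\
&\quad + \Big(\frac{\chi_{J_N}}{\m(J_N)} - \frac{\chi_{K_N}}{\m(K_N)}\Big),
\end{align*}
every summand being mean-zero and supported in a single interval, hence a multiple of a $p$-atom. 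Using the doubling of $\m$ from \eqref{eq:measure}, the $k$-th chain summand has atomic coefficient whose $p$-th power is $\lesssim \m(J_k)^{1-p}$ (resp.\ $\m(K_k)^{1-p}$); summing by monotonicity, and noting that the tail term contributes $\leq C\,\m(L)^{1-p}$, one obtains $\|G\|_{H^p\dm}^p \leq C\log_2(d/r)\,\m(L)^{1-p}$. Choosing the bound $|c|\leq C_2\m(I_2)$ together with the key measure comparison
\[
\frac{\m(L)}{\m(I_2)} \leq C\,\frac{d}{r},
\]
which holds because $x_2 \geq d$ puts both intervals in the ``Euclidean'' regime $\m(I(x_2,R))\simeq x_2^{2\lambda}R$ of \eqref{eq:measure}, yields $|c|^p\|G\|_{H^p\dm}^p \leq C\,C_2^p\m(I_2)\,\log_2(d/r)(d/r)^{1-p}$. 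Combined with the $a_i$ contributions and the trivial $(d/r)^{1-p}\log_2(d/r)\geq 1$, this is the lemma.

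The principal obstacle is the measure comparison $\m(L)/\m(I_j)\lesssim d/r$: if one were to route the estimate through the endpoint $x_j$ sitting near the origin (where \eqref{eq:measure} gives $\m(I(x_j,R))\simeq R^{2\lambda+1}$), the ratio would inflate to $(d/r)^{2\lambda+1}$ and destroy the desired $(d/r)^{1-p}$ scaling. The geometry rescues us: positivity of $x_1,x_2$ together with the separation $|x_1-x_2|\geq 4r$ forces at least one endpoint to satisfy $x_j\gtrsim d$, and the chain estimate must be funneled through that endpoint; this is the one place where the distinction between the Bessel setting and the Euclidean one truly matters in the argument.
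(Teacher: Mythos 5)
Your argument is correct, and its skeleton is the same as the paper's: telescope over the doubling intervals $I(x_i,2^kr)$ and join the two chains at scale $\simeq\abs{x_1-x_2}$ using the cancellation (i). The genuine difference is the bookkeeping, and it is not a cosmetic one. The paper telescopes the averages of $f_i$ and bounds each coefficient against the base measure of its own center, $\abs{\a_i^j}\lesssim 2^{j(1/p-1)}C_i\,\m(I(x_i,r))^{1/p}$; for $p<1$ this amounts to the linear growth $\m(I(x_i,2^jr))\lesssim 2^j\m(I(x_i,r))$, which \eqref{eq:measure} only yields when $2^jr\lesssim x_i$, and which degrades to $2^{j(2\lambda+1)}$ when $x_i$ sits near the origin. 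You avoid this: you peel off the oscillation of $f$ on each $I_i$ as a single atom, chain only the mean-transfer term $cG$, use one-step doubling (a fixed constant per step) together with monotonicity of the measure to bound every chain coefficient by the top-scale quantity $\m(L)^{1-p}$, and then compare $\m(L)$ only with $\m(I(x_2,r))$, where $x_2\geq\abs{x_1-x_2}$ guarantees $\m(L)\lesssim\frac{\abs{x_1-x_2}}{r}\,\m(I(x_2,r))$; pairing this with $\abs{c}\leq C_2\m(I(x_2,r))$ closes the estimate, at the harmless cost that the transfer term only produces the $C_2^p\m(I(x_2,r))$ half of the right-hand side. This is exactly the point your final paragraph isolates, and it is where the paper's displayed coefficient bound is used without the restriction it needs; the paper's decomposition can be repaired in the same spirit (note $\atld_1^j$ has numerator $\pm\int_{I(x_2,r)}f_2\,d\m$, so its chain can also be funneled through the far center and the top scale), so your route is best described as the same proof with a more robust summation, valid uniformly in the position of the intervals.

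Two small points to tidy up. The claim $J_N,K_N\supset I_1\cup I_2$ can fail by one scale (e.g. $\abs{x_1-x_2}=4r$, $2^Nr=4r$), but it is never used: what matters is only that $J_N\cap K_N\neq\emptyset$, so that $L$ is an interval carrying the mean-zero bridge term, and that $\m(J_N)\simeq\m(K_N)\simeq\m(L)$, which follows since the two intervals have equal radius $2^Nr\geq\abs{x_1-x_2}$. Also, say explicitly that (iii) forces $I_1\cap I_2=\emptyset$, which is what gives $\abs{f}\leq C_i$ on $I_i$, hence $\norm{a_i}_\infty\leq 2C_i$, and $\int_{I_2}f\,d\m=-c$.
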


\begin{proof}
	Assume that $f:= f_{1} + f_{2}$, where $\supp f_{i} \subset I(x_{i}, r)$ for $i=1,2$ (we can suppose that thanks to $(ii)$). We will show that $f$ has a particular $p$-atomic decomposition. To this end, we write
	$$f = \sum_{i=1}^{2} (f_{i} - \atld_{i}^{1} \chi_{I(x_{i}, 2r)}) + \sum_{i=1}^{2} \atld_{i}^{1} \chi_{I(x_{i}, 2r)} = f_{1}^{1} + f_{2}^{1}  +  \sum_{i=1}^{2} \atld_{i}^{1} \chi_{I(x_{i}, 2r)}$$
	where 
	$$\atld_{i}^{1} := \frac{1}{m_{\lambda} (I(x_{i},2r))} \int_{I(x_{i},r)} f_{i}(x) \d m_{\lambda}(x)$$
	and define $a_{i}^{1} := f_{i}^{1}/\a_{i}^{1}$, where $\a_{i}^{1} := \norm{f_{i}^{1}}_{\infty} \m(I(x_{i},2r))^{1/p}$, for $i=1,2$. Then we can see that $a_{i}^{1}$ is a $p$-atom supported on $I(x_{i}, 2r)$. Indeed, clearly $\supp a_{i}^{1} \subset I(x_{i},2r)$ and 
	$$\norm{a_{i}^{1}}_{\infty} = \frac{1}{\alpha_{i}^{1}} \norm{f_{i}^{1}}_{\infty} = \m(I(x_{i},2r))^{-1/p}.$$
	Finally, since 
	$$\int_{0}^{\infty} \atld_{i}^{1}\chi_{I(x_{i},2r)} (x) \d \m(x) = \int_{I(x_{i},r)}f_{i}(x) \d\m(x),$$
	we have that $\int_{0}^{\infty} a_{i}^{1}(x) \d\m(x) = 0$ and so $a_{i}^{1}$ is a $p$-atom. Moreover, since $\norm{f_{i}}_{\infty} \leq C_{i}$ (for $(ii)$) and $\m$ is doubling, we have that
	$$\abs{\a_{i}^{1}}  \lesssim 2^{1/p-1} C_{i} \m(I(x_{i},r))^{1/p}.$$
	
	For $i=1,2$, we further write
	$$\atld_{i}^{1}\chi_{I(x_{i},2r)} = \atld_{i}^{1} \chi_{I(x_{i},2r)} - \atld_{i}^{2} \chi_{I(x_{i},4r)} + \atld_{i}^{2} \chi_{I(x_{i},4r)} =: f_{i}^{2} + \atld_{i}^{2} \chi_{I(x_{i},4r)},$$
	where 
	$$\atld_{i}^{2} := \frac{1}{\m(I(x_{i},4r))} \int_{I(x_{i},r)} f_{i}(x) \d\m(x).$$
	Let $$\a_{i}^{2} := \norm{f_{i}^{2}}_{\infty} \m(I(x_{i}, 4r))^{1/p}$$
	and $a_{i}^{2} := f_{i}^{2}/ \a_{i}^{2}$. Then $a_{i}^{2}$ is a $p$-atom supported on $I(x_{i}, 4r)$. Indeed, it is obvious that $\supp a_{i}^{2} \subset I(x_{i},4r)$ and 
	$$\norm{a_{i}^{2}}_{\infty} = \frac{1}{\a_{i}^{2}} \norm{f_{i}^{2}}_{\infty} = \m(I(x_{i}, 4r))^{-1/p}.$$
	Finally, since
	$$\int_{0}^{\infty} \atld_{i}^{2}\chi_{I(x_{i},4r)} (x) \d \m(x) = \int_{I(x_{i},r)}f_{i}(x) \d\m(x),$$
	we have that $\int_{0}^{\infty} a_{i}^{2}(x) \d\m(x) = 0$ and so $a_{i}^{2}$ is a $p$-atom. Moreover, using \eqref{eq:measure} and the fact that $\m$ is doubling, we obtain that
	\begin{align*}
		\abs{\a_{i}^{2}} & \leq \abs{\atld_{i}^{1}} \m(I(x_{i},4r))^{1/p}\leq \frac{\m(I(x_{i}, r))}{\m(I(x_{i}, 2r))} \norm{f_{i}}_{\infty} \m(I(x_{i},4r))^{1/p} \\
		& \lesssim 2^{2(1/p-1)} C_{i}  \m(I(x_{i},r))^{1/p}.
	\end{align*}
	Continuing this fashion we see that
	$$f = \sum_{i=1}^{2} \left[ \sum_{j=1}^{J_{0}} f_{i}^{j} \right] + \sum_{i=1}^{2} \atld_{i}^{J_{0}} \chi_{I(x_{i}, 2^{J_{0}} r)} =  \sum_{i=1}^{2} \left[ \sum_{j=1}^{J_{0}} \a_{i}^{j} a_{i}^{j} \right] + \sum_{i=1}^{2} \atld_{i}^{J_{0}} \chi_{I(x_{i}, 2^{J_{0}} r)},$$
	where $J_{0}$ is the smallest integer larger than $\log_{2}\frac{\abs{x_{1}-x_{2}}}{r}$ and for $j \in \{2, 3, \ldots, J_{0}\}$,
	$$\atld_{i}^{j}:= \frac{1}{\m(I(x_{i}, 2^{j}r))} \int_{I(x_{i}, r)} f_{i}(x) \d\m(x),$$
	$$f_{i}^{j} := \atld_{i}^{j-1} \chi_{I(x_{i}, 2^{j-1}r)} - \atld_{i}^{j} \chi_{I(x_{i}, 2^{j}r)},$$
	$$\a_{i}^{j} := \norm{f_{i}^{j}}_{\infty} \m(I(x_{i}, 2^{j}r))^{1/p} \textrm{ and } a_{i}^{j}:= f_{i}^{j}/\a_{i}^{j}.$$
	By condition $(iii)$ we guarantee that $J_{0}\geq 2$.
	Moreover, for each $i$ and $j$, $a_{i}^{j}$ is a $p$-atom supported on $I(x_{i}, 2^{j}r)$. Indeed, for $j\in \{2,3, \ldots, J_{0}\}$, it is obvious that $\supp  a_{i}^{j} \subset I(x_{i},2^{j}r)$ and 
	$$\norm{a_{i}^{j}}_{\infty} = \frac{1}{\a_{i}^{j}} \norm{f_{i}^{j}}_{\infty} = \m(I(x_{i}, 2^{j}r))^{-1/p}.$$
	Finally, since
	$$\int_{0}^{\infty} \atld_{i}^{j}\chi_{I(x_{i},2^{j}r)} (x) \d \m(x) = \int_{I(x_{i},r)}f_{i}(x) \d\m(x),$$
	we have that $\int_{0}^{\infty} a_{i}^{j}(x) \d\m(x) = 0$ and so $a_{i}^{j}$ is a $p$-atom. In addition,  using again \eqref{eq:measure} and the fact that $\m$ is doubling we also have that
	\begin{align*}
		\abs{\a_{i}^{j}} & \leq \abs{\atld_{i}^{j-1}} \m(I(x_{i},2^{j}r))^{1/p}\leq \frac{\m(I(x_{i}, r))}{\m(I(x_{i}, 2^{j-1}r))} \norm{f_{i}}_{\infty} \m(I(x_{i},2^{j}r))^{1/p} \\
		& \lesssim 2^{j(1/p-1)} C_{i}  \m(I(x_{i},r))^{1/p}.
	\end{align*}
	For the last part, set
	\begin{align*}
		\sum_{i=1}^{2}& \atld_{i}^{J_{0}}  \chi_{I(x_{i}, 2^{J_{0}}r)} \\
		& = \left(\atld_{1}^{J_{0}} \chi_{I(x_{1}, 2^{J_{0}} r)} - \atld^{J_{0}} \chi_{I(\frac{x_{1}+x_{2}}{2}, 2^{J_{0}+1} r)}\right) + \left( \atld^{J_{0}} \chi_{I(\frac{x_{1}+x_{2}}{2}, 2^{J_{0}+1} r)} +  \atld_{2}^{J_{0}} \chi_{I(x_{2}, 2^{J_{0}} r)} \right) \\
		& := \sum_{i=1}^{2} f_{i}^{J_{0}+1},
	\end{align*}
	where
	\begin{align*}
		\atld^{J_{0}} & := \frac{1}{\m(I(\frac{x_{1}+x_{2}}{2}, 2^{J_{0}+1}r))} \int_{I(x_{1},r)} f_{1}(x) \d\m(x)\\ 
		& = -\frac{1}{\m(I(\frac{x_{1}+x_{2}}{2}, 2^{J_{0}+1}r))} \int_{I(x_{2},r)} f_{2}(x) \d\m(x),
	\end{align*}
	using $(i)$ in the last equality. Now, for $i=1,2$, let
	$$\a_{i}^{J_{0}+1} := \norm{f_{i}^{J_{0}+1}}_{\infty} \m(I(\frac{x_{1}+x_{2}}{2}, 2^{J_{0}+1}r))^{1/p}$$
	and
	$$a_{i}^{J_{0}+1} := f_{i}^{J_{0}+1}/\a_{i}^{J_{0}+1}.$$
	We see that $a_{i}^{J_{0}+1}$ is a $p$-atom: it is not difficult to see that $\supp  a_{i}^{J_{0}+1} \subset I(\frac{x_{1}+x_{2}}{2},2^{J_{0}+1}r)$ because $J_{0}$ is the smallest integer larger than $\log_{2} \frac{\abs{x_{1}-x_{2}}}{r}$ and 
	$$\norm{a_{i}^{J_{0}+1}}_{\infty} = \frac{1}{\a_{i}^{J_{0}+1}} \norm{f_{i}^{J_{0}+1}}_{\infty} = \m(I(\frac{x_{1}+x_{2}}{2}, 2^{J_{0}+1}r))^{-1/p}.$$
	Finally, since
	$$\int_{0}^{\infty} \atld^{J_{0}}\chi_{I(\frac{x_{1}+x_{2}}{2},2^{J_{0}+1}r)} (x) \d \m(x) = \int_{I(x_{1},r)}f_{1}(x) \d\m(x) = - \int_{I(x_{2},r)}f_{2}(x) \d\m(x),$$
	by $(i)$, we have that $\int_{0}^{\infty} a_{i}^{J_{0}+1}(x) \d\m(x) = 0$ and so $a_{i}^{J_{0}+1}$ is a $p$-atom. Moreover, by \eqref{eq:measure},
	\begin{align*}
		\abs{\a_{i}^{J_{0}+1}} & \leq \abs{\atld^{J_{0}}} \m(I(\frac{x_{1}+x_{2}}{2}, 2^{J_{0}+1}r))^{1/p} \\
		& \leq \frac{\m(I(\frac{x_{1}+x_{2}}{2}, 2^{J_{0}+1}r))^{1/p-1}}{\m(I(x_{i},r))^{1/p-1}} \norm{f_{i}}_{\infty} \m(I(x_{i},r))^{1/p} \\
		& \lesssim 2^{(J_{0}+1)(1/p-1)} C_{i} \m(I(x_{i},r))^{1/p}.
	\end{align*}
	
	In conclusion, we get the following $p$-atomic decomposition
	$$f = \sum_{i=1}^{2} \sum_{j=1}^{J_{0}+1} \a_{i}^{j} a_{i}^{j},$$
	with, for $i\in\{1, 2\}$,
	$$\abs{\a_{i}^{j}} \lesssim 2^{j(1/p-1)} C_{i} \m(I(x_{i},r))^{1/p}, \qquad j\in \{1, \ldots , J_{0}+1\}.$$
	By \cite{YangYang2011} we have that $f\in H^{p}\dm$ and
	\begin{align*}
		\norm{f}_{H^{p}\dm} & \leq \left( \sum_{i=1}^{2} \sum_{j=1}^{J_{0}+1} \abs{\a_{i}^{j}}^{p} \right)^{1/p} \lesssim \pth{\sum_{j=1}^{J_{0}+1}2^{j(1-p)} }^{1/p} \left( \sum_{i=1}^{2} C_{i}^{p} \m(I(x_{i},r)) \right)^{1/p}\\
		& \leq (J_{0}+1)^{1/p} (2^{(J_{0}+1)(1-p)})^{1/p} \left( \sum_{i=1}^{2} C_{i}^{p} \m(I(x_{i},r)) \right)^{1/p}\\
		&  \lesssim \left(\log_{2}\frac{\abs{x_{1}-x_{2}}}{r}\right)^{1/p} \pth{\frac{\abs{x_{1}-x_{2}}}{r}}^{1/p-1} \left( \sum_{i=1}^{2} C_{i}^{p} \m(I(x_{i},r)) \right)^{1/p}.
	\end{align*}
	This finishes the proof of Lemma~\ref{lemma1}.
\end{proof}

\begin{proposition} \label{proposition1}
	Let $p\in  \left(\frac{2\lambda + 1}{2\lambda + 2}, 1\right]$ and $q, r>0$ such that \eqref{eq:cond} holds. For every $\e > 0$, there exist $M>0$ and $C>0$ such that for all $p$-atom $a$, exists $g\in L^{q}\dm$ and $h \in L^{r}\dm$ satisfying that
	$$\norm{a - \Pi(g,h)}_{H^{p}\dm} < \e$$
	and $\norm{g}_{q} \norm{h}_{r} \leq CM^{\frac{2\lambda}{q}+1}$.
\end{proposition}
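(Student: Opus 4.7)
The plan is to adapt the Uchiyama-type construction used in \cite{DuongLiWickYang2015} for the case $p = 1$ to the full range $p \in \left(\frac{2\lambda+1}{2\lambda+2}, 1\right]$. Given a $p$-atom $a$ supported on $I_{0} := I(x_{0}, r_{0})$, I would construct a companion interval $I_{1} := I(y_{0}, r_{0})$ placed so that $|x_{0} - y_{0}| \simeq M r_{0}$, with $M$ large and to be chosen depending on $\e$, positioning $y_{0}$ according to the geometry of $I_{0}$ (above or below $x_{0}$, and at scale $Mr_{0}$ or $Mx_{0}$) so that one of the two items of Proposition~\ref{rieszestimate} applies and $R_{\Delta_{\lambda}}(x, y)$ has a definite sign and controlled size uniformly on $I_{0} \times I_{1}$. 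Then set $h := \m(I_{1})^{-1/r} \chi_{I_{1}}$, so that $\norm{h}_{r} = 1$, and $g := a / c$, where $c := R_{\Delta_{\lambda}}(x_{0}, y_{0})\, \m(I_{1})^{1 - 1/r}$ is the approximate value of $R_{\Delta_{\lambda}} h$ on $I_{0}$. Combining the lower bound on $|c|$, the atomic bound $\norm{a}_{\infty} \leq \m(I_{0})^{-1/p}$, the relation \eqref{eq:cond}, and the measure identity \eqref{eq:measure} gives $\norm{g}_{q}\norm{h}_{r} \leq C M^{\frac{2\lambda}{q} + 1}$.

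For the error, since $\supp g \cap \supp h = \emptyset$,
\begin{equation*}
a - \Pi(g, h) = \pth{a - g\, R_{\Delta_{\lambda}} h} \chi_{I_{0}} + \pth{h\, \tld{R_{\Delta_{\lambda}}} g} \chi_{I_{1}}.
\end{equation*}
On $I_{0}$, the first summand equals $a \pth{1 - R_{\Delta_{\lambda}} h / c}$, and applying Proposition~\ref{rieszcalderon}~\eqref{rieszcalderon2} to the difference $R_{\Delta_{\lambda}}(x, y) - R_{\Delta_{\lambda}}(x, y_{0})$ for $y \in I_{1}$ and $x \in I_{0}$ shows that $|R_{\Delta_{\lambda}} h / c - 1|$ is of order $1/M$, yielding an $L^{\infty}$ amplitude $C_{1} \lesssim M^{-1} \m(I_{0})^{-1/p}$ on $I_{0}$. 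On $I_{1}$, I would use the cancellation $\int g\, d\m = 0$, inherited from $\int a\, d\m = 0$, to write
\begin{equation*}
\tld{R_{\Delta_{\lambda}}} g(y) = \frac{1}{c} \int_{I_{0}} \left[ R_{\Delta_{\lambda}}(y, x) - R_{\Delta_{\lambda}}(y, x_{0}) \right] a(x) \, d\m(x),
\end{equation*}
and again invoke Proposition~\ref{rieszcalderon}~\eqref{rieszcalderon2}; multiplying by $\norm{h}_{\infty}$ gives an $L^{\infty}$ amplitude $C_{2}$ on $I_{1}$ which also carries a negative power of $M$.

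The resulting error $e := a - \Pi(g, h)$ satisfies $\int e\, d\m = 0$, by $\int a\, d\m = 0$ together with the adjoint identity $\int g\, R_{\Delta_{\lambda}} h \, d\m = \int h\, \tld{R_{\Delta_{\lambda}}} g \, d\m$; it is supported on $I_{0} \cup I_{1}$ with $|x_{0} - y_{0}| \geq 4 r_{0}$ once $M \geq 4$; and it satisfies $|e(x)| \leq C_{1}\chi_{I_{0}}(x) + C_{2}\chi_{I_{1}}(x)$. These are exactly the hypotheses of Lemma~\ref{lemma1}, which gives
\begin{equation*}
\norm{e}_{H^{p}\dm}^{p} \lesssim M^{1 - p} \log_{2}(M) \left[ C_{1}^{p} \m(I_{0}) + C_{2}^{p} \m(I_{1}) \right],
\end{equation*}
a quantity that tends to $0$ as $M \to \infty$; choosing $M$ large enough then forces $\norm{e}_{H^{p}\dm} < \e$. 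The main obstacle will be the case distinction in placing $I_{1}$ according to the position of $I_{0}$ relative to the origin (the regimes $r_{0} \ll x_{0}$ and $r_{0} \simeq x_{0}$ produce different behavior in \eqref{eq:measure}), so that in each geometric regime one of the kernel estimates of Proposition~\ref{rieszestimate} is available, while simultaneously ensuring that all the Bessel-measure factors combine to yield exactly the claimed bound $\norm{g}_{q}\norm{h}_{r} \leq C M^{\frac{2\lambda}{q}+1}$ with no spurious extra powers of $M$.
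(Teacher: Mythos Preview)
Your plan is essentially the paper's argument: place a companion interval $I_{1}$ at distance $\simeq Mr_{0}$, build the pair $(g,h)$ from $a$ and $\chi_{I_{1}}$, split the error into a piece on $I_{0}$ and a piece on $I_{1}$, bound each in $L^{\infty}$ via Proposition~\ref{rieszcalderon}\,\eqref{rieszcalderon2}, and feed the result into Lemma~\ref{lemma1} to get $\norm{a-\Pi(g,h)}_{H^{p}}^{p}\lesssim (\log_{2}M)/M^{2p-1}$. The case split you anticipate (``near origin'' $x_{0}\le 2Mr_{0}$ versus ``far'' $x_{0}>2Mr_{0}$) is exactly what the paper does.

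One point deserves care. You take $g=a/c$ on $I_{0}$ and $h\sim\chi_{I_{1}}$, which forces you to control $R_{\Delta_{\lambda}}h(x_{0})\approx R_{\Delta_{\lambda}}(x_{0},y_{0})$ from below. Proposition~\ref{rieszestimate} gives lower bounds for $R_{\Delta_{\lambda}}(x,y)$ only when $y\ll x$ (part~\eqref{rieszestimate1}) or when $y$ is \emph{slightly} larger than $x$ (part~\eqref{rieszestimate2}); in the near-origin regime $r_{0}\le x_{0}\le 2Mr_{0}$ neither option is compatible with the separation $|x_{0}-y_{0}|\ge 4r_{0}$ and with keeping $I_{1}\subset\R_{+}$ disjoint from $I_{0}$. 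The paper sidesteps this by reversing the roles: it sets $g=\chi_{I(y_{0},r)}$ and $h=-a/\tld{R_{\Delta_{\lambda}}}g(x_{0})$, so the needed lower bound is on $\tld{R_{\Delta_{\lambda}}}g(x_{0})=\int_{I_{1}}R_{\Delta_{\lambda}}(y,x_{0})\,d\m(y)$, and placing $y_{0}$ \emph{above} $x_{0}$ makes part~\eqref{rieszestimate1} apply directly. This swap also lets the paper normalize by the actual integral $\tld{R_{\Delta_{\lambda}}}g(x_{0})$ rather than a pointwise kernel value, so the $I_{0}$-piece becomes $a\cdot[\tld{R_{\Delta_{\lambda}}}g(x_{0})-\tld{R_{\Delta_{\lambda}}}g(x)]/\tld{R_{\Delta_{\lambda}}}g(x_{0})$ and only a single use of Proposition~\ref{rieszcalderon}\,\eqref{rieszcalderon2} is needed there. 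With that adjustment your outline goes through verbatim.
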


\begin{proof}
	Let $a$ be a $p$-atom with $\supp a \subset I(x_{0},r)$ where $x_{0}, r >0$. Observe that if $r>x_{0}$, then $I(x_{0}, r) = (x_{0}-r, x_{0}+r)\cap \R_{+} = I(\frac{x_{0}+r}{2}, \frac{x_{0}+r}{2})$. Therefore, without loss of generality, we may assume that 
	\begin{equation}\label{eq0}
		r \leq x_{0}.	
	\end{equation}
		
	Let $K_{1}$ and ${K}_{2}$ be the constants appeared in \eqref{rieszestimate1} and \eqref{rieszestimate2} of \propref{rieszestimate} respectively, and $K_{0} > \max\left\{\frac{1}{K_{1}}, \frac{1}{{K}_{2}}\right\} + 1>1$ large enough. For any $\e>0$, let $M$ be a positive constant large enough such that $M \geq 100 K_{0}$ and $\frac{\log_{2} M}{M^{2p-1}} < \e^{p}$ (possible since $p>1/2$ when $p\in \left(\frac{2\lambda + 1}{2\lambda + 2}, 1\right]$ ).
	
	We now consider the following two cases.
	
	{\bf Case (a):} Assume that $x_{0} \leq 2M r$. In this case, let $y_{0} := x_{0} + 2M K_{0}r$. Then, by \eqref{eq0},
	\begin{equation}\label{eq1}
		(1+K_{0}) x_{0} \leq y_{0} \leq (1+2MK_{0})x_{0}
	\end{equation}
	and
	\begin{equation}\label{eq2}
		2MK_{0}r \leq y_{0} \leq  (1+K_{0}) 2Mr.
	\end{equation}
	Define
	\begin{equation}\label{eq3}
		g(x):= \chi_{I(y_{0},r)} (x) \qquad \textrm{and}\qquad h(x):= -\frac{a(x)}{\tld{R_{\Delta_{\lambda}}} g(x_{0})}.
	\end{equation}
	By \eqref{eq0} and \eqref{eq1} we know that $y/x_{0} > K_{0} > K_{1}^{-1}$ for any $y\in I(y_{0}, r)$. Using this fact, \propref{rieszestimate} \eqref{rieszestimate1} and \eqref{eq2} we see that
	\begin{equation}\label{eq4}
		\abs{\tld{R_{\Delta_{\lambda}}} g(x_{0})} = \abs{\int_{y_{0}-r}^{y_{0}+r} R_{\Delta_{\lambda}}(y,x_{0})\d\m(y) } \gtrsim \int_{y_{0}-r}^{y_{0}+r} \frac{\d y}{y} \simeq \frac{r}{y_{0}} \simeq \frac{1}{M}.
	\end{equation}
	Moreover, from the definition of $g$ and $h$, and using \eqref{eq4} and \eqref{eq1}, it follows that
	\begin{align*}
		\norm{g}_{q} \norm{h}_{r} & \leq \frac{1}{\abs{\tld{R_{\Delta_{\lambda}}} g(x_{0})}} [\m (I(y_{0}, r))]^{1/q} [\m(I(x_{0}, r))]^{1/r - 1/p} \\
		& \lesssim M \pth{y_{0}^{2\lambda}r}^{1/q}\pth{x_{0}^{2\lambda}r}^{-1/q} \lesssim M^{\frac{2\lambda}{q}+1}.
	\end{align*}
	By the definition of the operator $\Pi$, we write
	$$a(x) - \Pi(g,h)(x) = a(x) \frac{\tld{R_{\Delta_{\lambda}}} g(x_{0}) - \tld{R_{\Delta_{\lambda}}} g(x)}{\tld{R_{\Delta_{\lambda}}} g(x_{0})} - g(x) R_{\Delta_{\lambda}} h(x) =: W_{1}(x) + W_{2}(x).$$
	Then it is obvious that $\supp W_{1} \subset I(x_{0},r)$ and $\supp W_{2} \subset I(y_{0}, r)$. From the cancellation property $\int_{0}^{\infty} a(y) \d \m(y) = 0$, the H\"older's regularity of the Riesz kernel $R_{\Delta_{\lambda}}(x,y)$ in \propref{rieszcalderon} \eqref{rieszcalderon2}, \eqref{eq4} and the fact that $\abs{x-y} \simeq \abs{x_{0}-y_{0}}$ for $y\in I(x_{0},r)$ and $x\in I(y_{0}, r)$, we have that
	\begin{align*}
		\abs{W_{2}(x)} & = \chi_{I(y_{0}, r)}(x) \abs{R_{\Delta_{\lambda}} h(x)} \\
		& \lesssim M  \chi_{I(y_{0}, r)}(x) \abs{ \int_{I(x_{0},r)} [R_{\Delta_{\lambda}} (x,y) - R_{\Delta_{\lambda}} (x,x_{0})] a(y) \d\m(y) } \\
		& \lesssim M  \chi_{I(y_{0}, r)}(x)  \int_{I(x_{0},r)} \frac{\abs{y-x_{0}}}{\abs{y-x}} \frac{\abs{a(y)}}{\m(I(y, \abs{y-x}))} \d\m(y) \\
		& \lesssim M  \chi_{I(y_{0}, r)}(x) \frac{r}{\abs{y_{0}-x_{0}}} \norm{a}_{\infty} \int_{I(x_{0},r)} \frac{\d\m(y)}{\m(I(y, \abs{y-x}))}\\
		& \lesssim C_{2} \chi_{I(y_{0}, r)}(x),
	\end{align*}
	where 
	$$C_{2} := \frac{\m(I(x_{0}, r))^{1-1/p}}{\m(I(y_{0}, \abs{y_{0}-x_{0}}))}.$$
	
	On the other hand, using again \eqref{eq4}, \propref{rieszcalderon} \eqref{rieszcalderon2} and the fact that $\abs{y-x_{0}} \simeq \abs{y_{0}-x_{0}}$ for $y\in I(y_{0}, r)$,
	\begin{align*}
		\abs{W_{1}(x)} & \lesssim M \chi_{I(x_{0}, r)}(x) \norm{a}_{{\infty}} \int_{I(y_{0},r)} \abs{R_{\Delta_{\lambda}}(y, x_{0}) - R_{\Delta_{\lambda}}(y,x) }\d\m(y) \\
		& \lesssim M \chi_{I(x_{0}, r)}(x) \frac{1}{\m(I(x_{0}, r))^{1/p}} \int_{I(y_{0},r)} \frac{\abs{x_{0}-x}}{\abs{x_{0}-y}} \frac{\d\m(y)}{\m(I(x_{0}, \abs{x_{0}-y}))} \\
		& \lesssim C_{1} \chi_{I(x_{0}, r)}(x),
	\end{align*}
	where
	$$C_{1} := \frac{\m(I(y_{0}, r)) \m(I(x_{0},r))^{-1/p}}{\m(I(x_{0}, \abs{x_{0}-y_{0}}))}.$$
	
	Moreover, using Fubini's theorem we can see that $\int_{0}^{\infty}\Pi(g,h)\d\m = 0$, then 
	$$\int_{0}^{\infty} [a - \Pi(g,h)]\d\m = 0.$$
	
	Hence, the function $f(x) := a(x) - \Pi(g,h)(x)$ satisfies all conditions in \lemmaref{lemma1}. Now from \lemmaref{lemma1}, we have that
	\begin{align*}
		\norm{a-\Pi(g,h)}_{H^{p}\dm}^{p} & \lesssim \pth{\frac{x_{0}-y_{0}}{r}}^{1-p} \log_{2}\pth{\frac{x_{0}-y_{0}}{r}} \times \\
		& \hspace{4cm} \times [C_{1}^{p} \m(I(x_{0},r)) + C_{2}^{p} \m(I(y_{0}, r))] \\
		& = \pth{\frac{x_{0}-y_{0}}{r}}^{1-p} \log_{2}\pth{\frac{x_{0}-y_{0}}{r}} \left[ \frac{\m(I(y_{0}, r))^{p}}{\m(I(x_{0}, \abs{x_{0}-y_{0}}))^{p}} \right. \\
		& \hspace{4cm} +\left. \frac{\m(I(x_{0}, r))^{p-1} \m(I(y_{0}, r))}{\m(I(y_{0}, \abs{y_{0}-x_{0}}))^{p}} \right]\\
		& \lesssim \pth{\frac{x_{0}-y_{0}}{r}}^{1-p} \log_{2}\pth{\frac{x_{0}-y_{0}}{r}} \frac{r^{p}}{\abs{x_{0}-y_{0}}^{p}} \\
		& \lesssim \frac{\log_{2}M}{M^{2p-1}} < \e^{p}.
	\end{align*}

	{\bf Case (b):} In this case we assume that $x_{0} > 2M r$ and let $y_{0}:= x_{0} - Mr/K_{0}$. Then it is clear that
	\begin{equation}\label{eq5}
		\frac{2K_{0}-1}{2K_{0}} x_{0} < y_{0} < x_{0}.
	\end{equation}
	Let $g$ and $h$ be as in Case (a) in \eqref{eq3}. For every $y\in I(y_{0}, r)$ and the fact that $K_{0}>\max\{1/K_{1}, 1/{K}_{2}\} + 1$ and $M\geq 100 K_{0}$ we have
	\begin{equation*}\label{eq6}
		0 < \frac{x_{0}}{y}-1 < {K}_{2}.
	\end{equation*}
	By \propref{rieszestimate} \eqref{rieszestimate2} and the fact that $y\simeq y_{0} \simeq x_{0}$ for $y\in I(y_{0},r)$, we conclude that
	\begin{equation}\label{eq7}
		\abs{\tld{R_{\Delta_{\lambda}}} g(x_{0})} \gtrsim \abs{\int_{y_{0}-r}^{y_{0}+r} \frac{1}{x_{0}^{\lambda}y_{0}^{\lambda}}\frac{\d\m(y)}{x_{0}-y} } \simeq \int_{y_{0}-r}^{y_{0}+r} \frac{\d y}{x_{0}-y_{0}} \simeq \frac{1}{M}. 
	\end{equation}
	Moreover, using the same operations as Case (a) and \eqref{eq5},
	$$\norm{g}_{q}\norm{h}_{r} \lesssim M \pth{y_{0}^{2\lambda}r}^{1/q}\pth{x_{0}^{2\lambda}r}^{-1/q}\lesssim M.$$
	Let $W_{1}, W_{2}, C_{1}$ and $C_{2}$ be the same as in Case (a). Then similarly, we obtain the same estimates $\abs{W_{1}(x)} \lesssim C_{1} \chi_{I(x_{0}, r)}(x)$ and $\abs{W_{2}(x)} \lesssim C_{2}\chi_{I(y_{0}, r)}(x)$ using \eqref{eq7} instead of \eqref{eq4}. Then by \lemmaref{lemma1} we obtain that
	\begin{align*}
		\norm{a-\Pi(g,h)}_{H^{p}\dm}^{p} & \lesssim \pth{\frac{x_{0}-y_{0}}{r}}^{1-p} \log_{2}\pth{\frac{x_{0}-y_{0}}{r}} \times \\
		& \hspace{4cm} \times [C_{1}^{p} \m(I(x_{0},r)) + C_{2}^{p} \m(I(y_{0}, r))] \\
		& = \pth{\frac{x_{0}-y_{0}}{r}}^{1-p} \log_{2}\pth{\frac{x_{0}-y_{0}}{r}} \left[ \frac{\m(I(y_{0}, r))^{p}}{\m(I(x_{0}, \abs{x_{0}-y_{0}}))^{p}} \right. \\
		& \hspace{4cm} +\left. \frac{\m(I(x_{0}, r))^{p-1} \m(I(y_{0}, r))}{\m(I(y_{0}, \abs{y_{0}-x_{0}}))^{p}} \right]\\
		& \lesssim \pth{\frac{x_{0}-y_{0}}{r}}^{1-p} \log_{2}\pth{\frac{x_{0}-y_{0}}{r}} \frac{r^{p}}{\abs{x_{0}-y_{0}}^{p}} \\
		& \lesssim \frac{\log_{2}M}{M^{2p-1}} < \e^{p}.
	\end{align*}
	which together with Case (a) completes the proof of \propref{proposition1}.
\end{proof}

We also need the following estimate of the bilinear operator $\Pi$.

\begin{proposition} \label{prop:pibounded}
	Let $p\in \left(\frac{1}{2}, 1\right)$ and $q, r>1$ such that \eqref{eq:cond} holds. There exists $C>0$ such that for any $g\in L^{q}\dm$ and $h\in L^{r}\dm$,
	$$\norm{\Pi(g,h)}_{H^{p}\dm} \leq C \norm{g}_{q} \norm{h}_{r}.$$
\end{proposition}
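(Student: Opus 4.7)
The plan is to prove the bound by duality. We use the Coifman--Weiss identification $(H^{p}\dm)^{*} \cong \lip\dm$ with $\a = \frac{1}{p}-1$, combined with the $L^{r}\dm \to L^{q'}\dm$ boundedness of the commutator $[b, R_{\Delta_{\lambda}}]$ for $b \in \lip\dm$, i.e.\ part \eqref{corollary1} of \thmref{corollary}, which is established prior to the present proposition in the paper's outline. Observe that the conditions $\a = \frac{1}{p}-1$ and $\a < \frac{1}{2\lambda+1}$ together give exactly $p > \frac{2\lambda+1}{2\lambda+2}$, so the commutator estimate applies on the entire range of $p$ where this proposition is actually invoked in the proof of \thmref{thetheorem}.

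First I would reduce by a standard density argument to $g \in L^{q}\dm \cap L^{\infty}\dm$ and $h \in L^{r}\dm \cap L^{\infty}\dm$ with compact supports, so that $\Pi(g,h) \in L^{1}\dm \cap L^{p}\dm$ and all the pairings below converge absolutely. The computational heart of the argument is the identity, valid for every $b \in \lip\dm$,
\begin{equation*}
	\int_{0}^{\infty} \Pi(g,h)(x)\, b(x)\, d\m(x) = \int_{0}^{\infty} g(x)\, [b, R_{\Delta_{\lambda}}]h(x)\, d\m(x),
\end{equation*}
which follows from expanding $\Pi(g,h) = g\, R_{\Delta_{\lambda}}h - h\, \tld{R_{\Delta_{\lambda}}} g$ and using the adjoint relation $\int (\tld{R_{\Delta_{\lambda}}} g)(bh)\, d\m = \int g\, R_{\Delta_{\lambda}}(bh)\, d\m$. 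Applying H\"older's inequality with exponents $q,\, q'=q/(q-1)$, together with the commutator bound $\norm{[b, R_{\Delta_{\lambda}}]h}_{q'} \lesssim \norm{b}_{\lip\dm} \norm{h}_{r}$ (valid exactly when $\a = \frac{1}{r}-\frac{1}{q'} = \frac{1}{p}-1$, matching \eqref{eq:cond}) then yields
\begin{equation*}
	\abs{\int_{0}^{\infty} \Pi(g,h)(x)\, b(x)\, d\m(x)} \lesssim \norm{g}_{q}\, \norm{h}_{r}\, \norm{b}_{\lip\dm}.
\end{equation*}

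Taking the supremum over $b$ with $\norm{b}_{\lip\dm} \leq 1$ (tested against a dense subclass of finite atomic combinations) and invoking the Coifman--Weiss duality gives $\norm{\Pi(g,h)}_{H^{p}\dm} \lesssim \norm{g}_{q} \norm{h}_{r}$, and the estimate extends to arbitrary $g \in L^{q}\dm$, $h \in L^{r}\dm$ by density. The main obstacle is the well-definedness of the pairing $\langle \Pi(g,h), b\rangle$ when $b\in \lip\dm$ is merely Lipschitz and not integrable: one replaces $b$ by $b-c$ for a suitable constant $c$ (which leaves the right-hand side invariant once one checks, via Fubini applied to the adjoint identity above, that $\Pi(g,h)$ has vanishing integral), and one exploits the compact support and $L^{\infty}$-control of the approximating $g,h$ to guarantee absolute convergence. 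Once the formal computation is justified on this approximating class, the estimate passes to the limit by density.
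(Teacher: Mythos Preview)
Your proposal is correct and follows essentially the same route as the paper: duality $(H^{p}\dm)^{*}\cong \lip\dm$, the identity $\langle \Pi(g,h),b\rangle = \langle g,[b,R_{\Delta_{\lambda}}]h\rangle$, H\"older, and the commutator bound $\norm{[b,R_{\Delta_{\lambda}}]h}_{q'}\lesssim \norm{b}_{\lip\dm}\norm{h}_{r}$. The only point to flag is organizational: in the paper the commutator bound is not established \emph{prior} to this proposition but rather \emph{inside} its proof, via the pointwise domination $|[b,R_{\Delta_{\lambda}}]h(x)|\leq \norm{b}_{\lip\dm} I_{\alpha}^{+}h(x)$ (from \propref{rieszcalderon}\,\eqref{rieszcalderon1} and the definition of $\lip$) and the $L^{r}\dm\to L^{q'}\dm$ boundedness of the positive fractional integral $I_{\alpha}^{+}$ due to Bramanti--Cerutti; the paper's proof of \thmref{corollary}\,\eqref{corollary1} then simply points back here. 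So to avoid an apparent circularity you should insert that one-line fractional-integral argument directly rather than cite \thmref{corollary}\,\eqref{corollary1}.
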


\begin{proof}
	Let $\a := \frac{1}{p} -1$. It is known \cite[Theorem B]{CoifmanWeiss1977} that the dual of $H^{p}\dm$ is $\lip \dm$. Then, by H\"older's inequality,
	\begin{align*}
		\norm{\Pi(g,h)}_{H^{p}\dm} & = \sup_{\substack{b \in \lip \dm,\\ \norm{b}_{\lip \dm} = 1}} \abs{\psc{\Pi(g,h), b}_{L^2(\mathbb{R}_+, dm_{\lambda})}} \\
		& = \sup_{\substack{b \in \lip \dm,\\ \norm{b}_{\lip \dm} = 1}} \abs{\psc{[b, R_{\Delta_{\lambda}}]h, g}_{L^2(\mathbb{R}_+, dm_{\lambda})}} \\
		& \leq \sup_{\substack{b \in \lip \dm,\\ \norm{b}_{\lip \dm} = 1}} \norm{[b, R_{\Delta_{\lambda}}]h}_{q'} \norm{g}_{q},
	\end{align*}
 	where $q'$ is the conjugate exponent of $q$. Now, since $b\in \lip \dm$ and using the usual kernel estimate in \propref{rieszcalderon} \eqref{rieszcalderon1} and \eqref{eq:measure} we obtain that
	\begin{align*}
		\abs{[b, R_{\Delta_{\lambda}}]h(x)} & \leq \int_{0}^{\infty}\abs{h(y)} \abs{R_{\Delta_{\lambda}} (x,y)} \abs{b(x)-b(y)} d\m(y) \\
		& \leq \norm{b}_{\lip \dm} \int_{0}^{\infty} \frac{\abs{h(y)} d\m(y)}{\m(I(x, \abs{x-y}))^{1-\a} }\\
		& = \norm{b}_{\lip \dm} I_{\a}^{+} h(x),
	\end{align*}
	where $I_{\a}^{+}$ is the positive fractional integral operator defined by
	$$I_{\a}^{+} f(x) := \int_{0}^{\infty}\frac{\abs{f(y)} d\m(y)}{\m(I(x, \abs{x-y}))^{1-\a} }.$$
	It follows then that
	$$\norm{[b, R_{\Delta_{\lambda}}]h}_{q'}  \lesssim \norm{b}_{\lip \dm} \norm{I_{\a}^{+}h}_{q'}.$$ 
	By \cite[Theorem 4.1]{BramantiCerutti1995}, $I_{\a}$ is bounded on $L^{r}\dm \to L^{q'}\dm$ provided that $1<r<1/\a$ with the condition $\frac{1}{q'} = \frac{1}{r} - \a$, which is true in our case by \eqref{eq:cond}. Note that in their proof, they actually prove that
	$$\norm{I_{\a}f}_{q'} \leq \norm{I_{\a}^{+}f }_{q'} \leq C \norm{f}_{r}, \qquad f\in L^{r}\dm.$$
	 Therefore,
	$$\sup_{\substack{b \in \lip \dm,\\ \norm{b}_{\lip \dm} = 1}} \norm{[b, R_{\Delta_{\lambda}}]h}_{q'} \lesssim \norm{h}_{r}$$ 
	and the proposition is proved. 
\end{proof}

Now we are in the situation to prove our main result.

\begin{proof}[Proof of \thmref{thetheorem}]
	By \propref{prop:pibounded} we have that for any $g\in L^{q}\dm$ and $h\in L^{r}\dm$,
	$$\norm{\Pi(g,h)}_{H^{p}\dm} \lesssim \norm{g}_{q} \norm{h}_{r}.$$
	From this, for any $f\in H^{p}\dm$ having the representation \eqref{eq:weakrep} with
	$$\pth{\sum_{k=1}^{\infty} \sum_{j=1}^{\infty} \pth{\abs{\a_{j}^{k}} \norm{g_{j}^{k}}_{q} \norm{h_{j}^{k}}_{r} }^{p} }^{1/p} < \infty$$
	it follows that
	\begin{align*}
		\norm{f}_{H^{p}\dm}^{p} & = \norm{\sum_{k=1}^{\infty} \sum_{j=1}^{\infty} \a_{j}^{k} \Pi(g_{j}^{k}h_{j}^{k})}_{H^{p}\dm}^{p} \\
		& \leq \sum_{k=1}^{\infty} \sum_{j=1}^{\infty} \abs{\a_{j}^{k}}^{p} \norm{\Pi(g_{j}^{k}, h_{j}^{k})}_{H^{p}\dm}^{p} \\
		& \lesssim \sum_{k=1}^{\infty} \sum_{j=1}^{\infty} \abs{\a_{j}^{k}}^{p} \norm{g_{j}^{k}}_{q}^{p} \norm{h_{j}^{k}}_{r}^{p}.
	\end{align*}
	Then, we have
	$$\norm{f}_{H^{p}\dm} \lesssim \inf \left\{  \pth{ \sum_{k=1}^{\infty} \sum_{j=1}^{\infty} \abs{\a_{j}^{k}}^{p} \norm{g_{j}^{k}}_{q}^{p} \norm{h_{j}^{k}}_{r}^{p}}^{\frac{1}{p}} \colon f = \sum_{k=1}^{\infty} \sum_{j=1}^{\infty} \alpha_{j}^{k} \Pi (g_{j}^{k}, h_{j}^{k}) \right\}.$$
	
	To see the converse, let $f\in H^{p}\dm$. We will show that $f$ has a representation as in \eqref{eq:weakrep} with
	 \begin{equation}\label{eq:norm}
	 	\inf \left\{ \pth{ \sum_{k, j=1}^{\infty} \pth{\abs{\a_{j}^{k}} \norm{g_{j}^{k}}_{q} \norm{h_{j}^{k}}_{r}}^{p}}^{\frac{1}{p}} \colon f = \sum_{k, j=1}^{\infty} \alpha_{j}^{k} \Pi (g_{j}^{k}, h_{j}^{k}) \right\} \lesssim \norm{f}_{H^{p}\dm}.
	\end{equation}
	 To this end, assume that $f$ has the following atomic representation $f = \sum_{j=1}^{\infty} \a_{j}^{1}a_{j}^{1}$ with $\pth{\sum_{j} \abs{\a_{j}^{1}}^{p}}^{1/p} \leq C \norm{f}_{H^{p}\dm}$ for certain constant $C \in (1, \infty)$ (see \cite[Theorem 1.1]{YangYang2011}), where $\{\a_{j}^{1}\}_{j}$ are numbers and $\{a_{j}^{1}\}_{j}$ are $p$-atoms.  
	 
	 First of all, for given $\e>0$, such that $\e C < 1$, and $a_{j}^{1}$, by \propref{proposition1}, there exist $g_{j}^{1}  \in L^{q}\dm$ and $h_{j}^{1} \in L^{r}\dm$ with
	 $$ \norm{g_{j}^{1}}_{q} \norm{h_{j}^{1}}_{r} \lesssim M^{\frac{2\lambda}{q}+1}$$
	 and
	 $$\norm{a_{j}^{1} - \Pi(g_{j}^{1}, h_{j}^{1})}_{H^{p}\dm} < \e.$$
	 Now we write 
	 $$f = \sum_{j=1}^{\infty} \a_{j}^{1}a_{j}^{1} = \sum_{j=1}^{\infty} \a_{j}^{1} \Pi(g_{j}^{1}, h_{j}^{1}) + \sum_{j=1}^{\infty} \a_{j}^{1}\left[a_{j}^{1} - \Pi(g_{j}^{1}, h_{j}^{1})\right] =: M_{1} + E_{1}.$$
	 Observe that 
	 $$\norm{E_{1}}_{H^{p}\dm}^{p} \leq \sum_{j=1}^{\infty} \abs{\a_{j}^{1}}^{p} \norm{a_{j}^{1} - \Pi(g_{j}^{1}, h_{j}^{1})}_{H^{p}\dm}^{p} \leq \e^{p} C^{p}\norm{f}_{H^{p}\dm}^{p}.$$
	 
	 Since $E_{1}\in H^{p}\dm$, by \cite[Theorem 1.1]{YangYang2011} again, there exist a sequence of $p$-atoms $\{a_{j}^{2}\}_{j}$ and numbers $\{\a_{j}^{2}\}_{j}$ such that $E_{1} = \sum_{j=1}^{\infty} \a_{j}^{2} a_{j}^{2}$ and
	 $$\pth{\sum_{j=1}^{\infty} \abs{\a_{j}^{2}}^{p}}^{1/p} \leq C \norm{E_{1}}_{H^{p}\dm} \leq \e C^{2} \norm{f}_{H^{p}\dm}.$$
	 
	 Another application of \propref{proposition1} with $\e$ and $a_{j}^{2}$ implies that there exist functions $g_{j}^{2} \in L^{q}\dm$ and $h_{j}^{2} \in L^{r}\dm$ with
	 $$ \norm{g_{j}^{2}}_{q} \norm{h_{j}^{2}}_{r} \lesssim M^{\frac{2\lambda}{q}+1} $$ 
	 and 
	 $$\norm{a_{j}^{2} - \Pi(g_{j}^{2}, h_{j}^{2})}_{H^{p}\dm} < \e.$$
	 Thus, we have
	 $$E_{1} = \sum_{j=1}^{\infty} \a_{j}^{2} a_{j}^{2} = \sum_{j=1}^{\infty} \a_{j}^{2} \Pi(g_{j}^{2}, h_{j}^{2}) + \sum_{j=1}^{\infty} \a_{j}^{2}[a_{j}^{2} - \Pi(g_{j}^{2}, h_{j}^{2})] =: M_{2} + E_{2}.$$
	 Moreover, 
	 $$\norm{E_{2}}_{H^{p}\dm}^{p} \leq \sum_{j=1}^{\infty} \abs{\a_{j}^{2}}^{p} \norm{a_{j}^{2} - \Pi(g_{j}^{2}, h_{j}^{2})}_{H^{p}\dm}^{p} \leq \e^{2p} C^{2p}\norm{f}_{H^{p}\dm}^{p}.$$
	 Then, we conclude that
	 $$f = \sum_{j=1}^{\infty} \a_{j}^{1} a_{j}^{1} = \sum_{k=1}^{2} \sum_{j=1}^{\infty} \a_{j}^{k} \Pi(g_{j}^{k}, h_{j}^{k}) + E_{2}.$$
	 
	 Continuing in this way, we deduce that for any $K\in \N$, $f$ has the following representation
	 $$f = \sum_{k=1}^{K} \sum_{j=1}^{\infty} \a_{j}^{k} \Pi(g_{j}^{k}, h_{j}^{k}) + E_{K}$$
	 satisfying, for any $k\in \{1, \ldots, K\}$,
	 $$ \norm{g_{j}^{k}}_{q} \norm{h_{j}^{k}}_{r} \lesssim M^{\frac{2\lambda}{q}+1},$$
	 $$\pth{\sum_{j=1}^{\infty} \abs{\a_{j}^{k}}^{p}}^{1/p} \leq \e^{k-1} C^{k} \norm{f}_{H^{p}\dm}$$
	 and
	 $$\norm{E_{K}}_{H^{p}\dm} \leq (\e C)^{K}\norm{f}_{H^{p}\dm}.$$
	 Thus, letting $K \to \infty$, we see that \eqref{eq:weakrep} holds. Moreover, since $\e C <1$, we have that
	 $$\sum_{k=1}^{\infty} \sum_{j=1}^{\infty} \abs{\a_{j}^{k}}^{p} \leq \sum_{k=1}^{\infty} \e^{-p} (\e C)^{kp} \norm{f}_{H^{p}\dm}^{p} \lesssim \norm{f}_{H^{p}\dm}^{p}$$
	 which implies \eqref{eq:norm} and hence, completes the proof of \thmref{thetheorem}.
\end{proof}
	
Finally, we can prove \thmref{corollary} that follows from \thmref{thetheorem}. 

\begin{proof}[Proof of \thmref{corollary}]
\begin{enumerate}
	\item This is already proved in the proof of Proposition~\ref{prop:pibounded}.
	\item  Assume that $[b, R_{\Delta_{\lambda}}]$ is bounded from $L^{p}\dm$ to $L^{q}\dm$ with norm $\norm{[b, R_{\Delta_{\lambda}}]}_{p\to q} := \norm{[b, R_{\Delta_{\lambda}}]}_{L^{p}\dm \to L^{q}\dm}$. Let $f\in H^{t}\dm$ such that $$\frac{1}{t} = \frac{1}{p} + \frac{1}{q'},$$
	where $q'$ is the conjugate exponent of $q$. By hypothesis and \thmref{thetheorem}, there exists numbers $\{\a_{j}^{k}\}_{j,k}$, functions $\{g_{j}^{k}\}_{j,k} \subset L^{q'}\dm$ and $\{h_{j}^{k}\}_{j,k} \subset L^{p}\dm$ such that
	
	\begin{eqnarray*}
	\langle b, f \rangle_{L^2(\mathbb{R}_+, dm_{\lambda})}  & = & \sum_{k=1}^{\infty} \sum_{j=1}^{\infty}  \a_{j}^{k} \left\langle b, \Pi \pth{g_{j}^{k}, h_{j}^{k}} \right\rangle_{L^2(\mathbb{R}_+, dm_{\lambda})}\\
	& = & \sum_{k=1}^{\infty} \sum_{j=1}^{\infty} \a_{j}^{k} \left\langle g_{j}^{k}, [b, R_{\Delta_{\lambda}}] h_{j}^{k}\right\rangle_{L^2(\mathbb{R}_+, dm_{\lambda})}.
	\end{eqnarray*}

	By H\"older's inequality, the hypothesis, the fact that $t<1$ and \thmref{thetheorem} again, it implies that
	\begin{align*}
		\abs{\langle b, f \rangle_{{L^2(\mathbb{R}_+, dm_{\lambda})}}} & \leq \sum_{k=1}^{\infty} \sum_{j=1}^{\infty} \abs{\a_{j}^{k}} \norm{g_{j}^{k}}_{q'} \norm{[b, R_{\Delta_{\lambda}}] h_{j}^{k}}_{q} \\
		& \leq \norm{[b, R_{\Delta_{\lambda}}]}_{p\to q} \sum_{k=1}^{\infty} \sum_{j=1}^{\infty} \abs{\a_{j}^{k}} \norm{g_{j}^{k}}_{q'}\norm{h_{j}^{k}}_{p} \\
		& \leq \norm{[b, R_{\Delta_{\lambda}}]}_{p\to q} \pth{ \sum_{k=1}^{\infty} \sum_{j=1}^{\infty} \pth{ \abs{\a_{j}^{k}} \norm{g_{j}^{k}}_{q'}\norm{h_{j}^{k}}_{p}}^{t}}^{\frac{1}{t}} \\
		& \lesssim \norm{[b, R_{\Delta_{\lambda}}]}_{p\to q}  \norm{f}_{H^{t}\dm}.
	\end{align*}
	Then, if $\a = \frac{1}{t}-1 = \frac{1}{p}-\frac{1}{q}$, which is true by hypothesis, by duality \cite[Theorem B]{CoifmanWeiss1977} between $H^{t}\dm$ and $\lip\dm$, we finish the proof of \thmref{corollary}. \qedhere
\end{enumerate}
\end{proof}

\section{Comments} \label{comments}

We want to point out that the proofs here only work for $p$ values that are large enough. Namely, we have proved the weak factorization of Hardy spaces $H^{p}\dm$ for values of $p\in \left(\frac{2\lambda + 1}{2\lambda + 2}, 1\right]$. A reasonable question would be what happens when $p$ is small. More concretely for values of $p\in \left(0, \frac{1}{2}\right]$. This is an open problem and we do not know exactly how to proceed to solve this question. In fact, in order to solve that we need a crucial result that characterizes the Hardy spaces $H^{p}\dm$ and the atomic ones for values of $p\in \left(0,\frac{1}{2}\right)$ and we think this is not trivial at all. We need more conditions on the atoms, that is, we need to impose more vanishing moment conditions as \cite{CoifmanWeiss1977} suggested.

\bibliographystyle{acm}
\bibliography{wfhs}

\begin{thebibliography}{10}

\bibitem{Betancor2010}
{\sc Betancor, J.~J., Chicco~Ruiz, A., Fari{\~n}a, J.~C., and
  Rodr{\'{\i}}guez-Mesa, L.}
\newblock Maximal operators, {R}iesz transforms and {L}ittlewood-{P}aley
  functions associated with {B}essel operators on {BMO}.
\newblock {\em J. Math. Anal. Appl. 363}, 1 (2010), 310--326.

\bibitem{BetancorDziubanskiTorrea2009}
{\sc Betancor, J.~J., Dziuba{\'n}ski, J., and Torrea, J.~L.}
\newblock On {H}ardy spaces associated with {B}essel operators.
\newblock {\em J. Anal. Math. 107\/} (2009), 195--219.

\bibitem{Betancor2007}
{\sc Betancor, J.~J., Fari{\~n}a, J.~C., Buraczewski, D., Mart{\'{\i}}nez, T.,
  and Torrea, J.~L.}
\newblock Riesz transforms related to {B}essel operators.
\newblock {\em Proc. Roy. Soc. Edinburgh Sect. A 137}, 4 (2007), 701--725.

\bibitem{BramantiCerutti1995}
{\sc Bramanti, M., and Cerutti, M.~C.}
\newblock Commutators of singular integrals and fractional integrals on
  homogeneous spaces.
\newblock In {\em Harmonic analysis and operator theory ({C}aracas, 1994)},
  vol.~189 of {\em Contemp. Math.} Amer. Math. Soc., Providence, RI, 1995,
  pp.~81--94.

\bibitem{CoifmanRochbergWeiss1976}
{\sc Coifman, R.~R., Rochberg, R., and Weiss, G.}
\newblock Factorization theorems for {H}ardy spaces in several variables.
\newblock {\em Ann. of Math. (2) 103}, 3 (1976), 611--635.

\bibitem{CoifmanWeiss1977}
{\sc Coifman, R.~R., and Weiss, G.}
\newblock Extensions of {H}ardy spaces and their use in analysis.
\newblock {\em Bull. Amer. Math. Soc. 83}, 4 (1977), 569--645.

\bibitem{Duren1970}
{\sc Duren, P.~L.}
\newblock {\em Theory of {$H\sp{p}$} spaces}.
\newblock Pure and Applied Mathematics, Vol. 38. Academic Press, New
  York-London, 1970.

\bibitem{FeffermanStein1972}
{\sc Fefferman, C., and Stein, E.~M.}
\newblock {$H\sp{p}$} spaces of several variables.
\newblock {\em Acta Math. 129}, 3-4 (1972), 137--193.

\bibitem{Grafakos2014}
{\sc Grafakos, L.}
\newblock {\em Modern {F}ourier analysis}, third~ed., vol.~250 of {\em Graduate
  Texts in Mathematics}.
\newblock Springer, New York, 2014.

\bibitem{Janson1978}
{\sc Janson, S.}
\newblock Mean oscillation and commutators of singular integral operators.
\newblock {\em Ark. Mat. 16}, 2 (1978), 263--270.

\bibitem{KarlovichLerner2005}
{\sc Karlovich, A.~Y., and Lerner, A.~K.}
\newblock Commutators of singular integrals on generalized {$L\sp p$} spaces
  with variable exponent.
\newblock {\em Publ. Mat. 49}, 1 (2005), 111--125.

\bibitem{MuckenhouptStein1965}
{\sc Muckenhoupt, B., and Stein, E.~M.}
\newblock Classical expansions and their relation to conjugate harmonic
  functions.
\newblock {\em Trans. Amer. Math. Soc. 118\/} (1965), 17--92.

\bibitem{Paluszynski1995}
{\sc Paluszy{\'n}ski, M.}
\newblock Characterization of the {B}esov spaces via the commutator operator of
  {C}oifman, {R}ochberg and {W}eiss.
\newblock {\em Indiana Univ. Math. J. 44}, 1 (1995), 1--17.

\bibitem{PaluszynskiTaiblesonWeiss1991}
{\sc Paluszy{\'n}ski, M., Taibleson, M., and Weiss, G.}
\newblock Characterization of {L}ipschitz spaces via the commutator operator of
  {C}oifman, {R}ochberg, and {W}eiss.
\newblock {\em Rev. Un. Mat. Argentina 37}, 1-2 (1991), 142--144 (1992).
\newblock X Latin American School of Mathematics (Spanish) (Tanti, 1991).

\bibitem{Stein1993}
{\sc Stein, E.~M.}
\newblock {\em Harmonic analysis: real-variable methods, orthogonality, and
  oscillatory integrals}, vol.~43 of {\em Princeton Mathematical Series}.
\newblock Princeton University Press, Princeton, NJ, 1993.
\newblock With the assistance of Timothy S. Murphy, Monographs in Harmonic
  Analysis, III.

\bibitem{DuongLiWickYang2015}
{\sc {Thinh Duong}, X., {Li}, J., {Wick}, B.~D., and {Yang}, D.}
\newblock {Factorization for Hardy spaces and characterization for BMO spaces
  via commutators in the Bessel setting}.
\newblock {\em ArXiv e-prints\/} (Aug. 2015).

\bibitem{Uchiyama1981}
{\sc Uchiyama, A.}
\newblock The factorization of {$H\sp{p}$} on the space of homogeneous type.
\newblock {\em Pacific J. Math. 92}, 2 (1981), 453--468.

\bibitem{Villani2008}
{\sc Villani, M.}
\newblock Riesz transforms associated to {B}essel operators.
\newblock {\em Illinois J. Math. 52}, 1 (2008), 77--89.

\bibitem{Weinstein1948}
{\sc Weinstein, A.}
\newblock Discontinuous integrals and generalized potential theory.
\newblock {\em Trans. Amer. Math. Soc. 63\/} (1948), 342--354.

\bibitem{YangYang2011}
{\sc Yang, D., and Yang, D.}
\newblock Real-variable characterizations of {H}ardy spaces associated with
  {B}essel operators.
\newblock {\em Anal. Appl. (Singap.) 9}, 3 (2011), 345--368.

\end{thebibliography}
\addcontentsline{toc}{chapter}{Bibliography}

\end{document}